\documentclass[11pt, reqno]{amsart}

\usepackage[margin=1in]{geometry}

\usepackage[utf8]{inputenc}
\usepackage{import}
\usepackage{amsmath,amssymb,amsthm,amsfonts}
\usepackage{physics}
\usepackage[pdfborder={0 0 0}]{hyperref}
\usepackage{mathtools}
\usepackage{cases}
\usepackage{xcolor}
\usepackage{enumerate}

\newcommand{\Ni}{\noindent}

\newcommand{\vpran}[1]{\left(#1\right)}

\newcommand{\One}{{\bf 1}}
\newcommand{\RR}{\mathbb{R}}
\newcommand{\nn}{\nonumber}

\numberwithin{equation}{section}

\title[$L^p$-norms]{$L^p$-norms for the homogeneous non-cutoff Boltzmann equation with soft potentials}

\author{Matt Spragge}
\address{Department of Mathematics, Simon Fraser University, 8888 University Dr., Burnaby, BC V5A 1S6, Canada}
\email{matt\_spragge@sfu.ca}
\author{Weiran Sun}
\address{Department of Mathematics, Simon Fraser University, 8888 University Dr., Burnaby, BC V5A 1S6, Canada}
\email{weiran\_sun@sfu.ca}

\date{}

\DeclarePairedDelimiter\jap{\langle}{\rangle}

\newcommand{\gam}{\gamma}

\newcommand{\one}[1]{\chi_{\left\{#1\right\}}}

\newcommand{\rn}[1]{\mathbb{R}^{#1}}
\newcommand{\sn}[1]{\mathbb{S}^{#1}}
\newcommand{\wlp}[2]{L^{#1}_{#2}}
\newcommand{\lp}[1]{L^{#1}}
\newcommand{\whp}[2]{H^{#1}_{#2}}
\newcommand{\hp}[1]{H^{#1}}

\newtheorem{theorem}{Theorem}[section]
\newtheorem{lemma}{Lemma}[section]
\newtheorem{remark}{Remark}
\newtheorem{corollary}{Corollary}[section]

\begin{document}

\maketitle

\begin{abstract}
We establish a priori estimates showing the propagation and generation of $L^p$-norms for solutions to the non-cutoff spatially homogeneous Boltzmann equation with soft potentials. The singularity of the collision kernel is key to generate regularization and inhomogeneity in the energy estimates of the $L^p$-norms. Our result extends \cite{Alo19} from the hard potential cases to the soft ones.
    
\end{abstract}

\section{Introduction}\label{sec;introduction}

In this paper we show the propagation and generation of $L^p$-norms for solutions to the non-cutoff spatially homogeneous Boltzmann equation with soft potentials. The Cauchy problem for this equation is given by 
\begin{equation}\label{eq:Boltzmann}
    \begin{cases}
        \partial_t f(t,v)=Q(f,f)(t,v) & (t,v)\in(0,\infty)\times\rn{3}, \\
        f(0,v)=f_0(v),
    \end{cases}
\end{equation}
where $f(t,v)$ is the nonnegative density function at velocity $v\in\rn{3}$ and time $t>0$. The Boltzmann collision operator, $Q$, is defined by
\begin{equation*}
    Q(g,f)\coloneqq\int_{\rn{3}}\int_{\sn{2}}(g_*'f'-g_*f)B(v-v_*,\sigma)d\sigma\, dv_*,
\end{equation*}
where $(v', v'_\ast)$ and $(v, v_\ast)$ are two pairs of velocities before and after the elastic collision or vice versa. They satisfy the conservation of momentum and energy, which gives, for some $\sigma \in \sn{2}$, 
\begin{equation*}
    v'=\frac{v+v_*}{2}+\frac{\abs{v-v_*}}{2}\sigma,\quad v_*'=\frac{v+v_*}{2}-\frac{\abs{v-v_*}}{2}\sigma. 
\end{equation*}
For brevity we write 
\begin{align*}
   f\coloneqq f(t,v), 
\quad
   f_*\coloneqq f(t,v_*),
\quad 
   f'\coloneqq f(t,v'),
\quad
   f_*'\coloneqq f(t,v_*').
\end{align*} 
The Boltzmann collision kernel, $B$, is assumed to have the form
\begin{equation}\label{cond:B-1}
    B(v-v_*,\sigma) 
 = \Phi(\abs{v-v_*}) \, b\left(\frac{v-v_*}{\abs{v-v_*}}\cdot\sigma\right), 
\end{equation}
where the angular collision kernel, $b$, possesses a nonintegrable singularity described by
\begin{equation}\label{cond:B-2}
    s\in(0,1), 
\quad
    \sin\theta \, b(\cos\theta) 
\approx 
   \frac{b_0}{\theta^{1+2s}}
\quad
\text{for $\theta$ near  0},
\quad
\cos\theta=\frac{v-v_*}{\abs{v-v_*}}\cdot\sigma.
\end{equation}
Moreover, we assume that $\Phi$ takes the form
\begin{equation}\label{cond:B-3}
    \Phi(\abs{v-v_*}) = \abs{v-v_*}^\gam,\quad \gam\in\left(-3,0\right).
\end{equation}
This form of collision kernel applies, for example, to particles with inverse-power law potentials of the form $r^{-\ell}$ with $r$ denoting the distance between two particles. In this case, $\gam$ and $s$ may be expressed as
\begin{equation}\label{eqn;intro;inverse-power law gamma s relationship}
    \gam=\frac{\ell-4}{\ell},
\qquad 
   s=\frac{1}{\ell}. 
\end{equation}

Through a variety of change of variables, we recall that the Boltzmann collision operator admits the following weak formulations:
\begin{align} \label{Q:weak-form}
& \quad \,
    \int_{\rn{3}}Q(g,f)\varphi dv   \nn
\\
&= -\frac{1}{4}\int_{\rn{3}\times\rn{3}}\int_{\sn{2}}(\varphi'+\varphi'_*-\varphi-\varphi_*)(g_*'f_*-g_*f)\abs{v-v_*}^\gam 
 b\left(\frac{v-v_*}{\abs{v-v_*}}\cdot\sigma\right)d\sigma\, dv_*\, dv,
\end{align}
and
\begin{equation}\label{eqn:weak-form}
    \int_{\rn{3}}Q(g,f)\varphi dv=\int_{\rn{3}\times\rn{3}}\int_{\sn{2}}(\varphi'-\varphi)g_*f\abs{v-v_*}^\gam b\left(\frac{v-v_*}{\abs{v-v_*}}\cdot\sigma\right)d\sigma\, dv_*\, dv,
\end{equation}
where $\varphi(v)$ is a smooth test function. As a consequence of~\eqref{Q:weak-form}, we have the following conservation laws for the Boltzmann equation~\eqref{eq:Boltzmann}:
\begin{equation}\label{eqn;intro;conservation laws}
    \dv{t}\int_{\rn{3}\times\rn{3}}f\mqty(1 \\ v \\ \abs{v}^2)dv=0.
\end{equation}
It also follows from~\eqref{Q:weak-form} that $f$ satisfies
\begin{equation}\label{eqn;intro;H theorem}
    \dv{t}\int_{\rn{3}}f\log fdv\leq0,
\end{equation}
which, in particular, is a result famously known as \textit{Boltzmann's $H$ theorem} (see, for example, \cite{Vil02}). Equations (\ref{eqn;intro;conservation laws}) and (\ref{eqn;intro;H theorem}) indicate that we may take the class:
\begin{equation*}\label{eqn;intro;U definition}
    \mathcal{U}(D_0,E_0)
 \coloneqq
    \left\{f\in\lp{1}(\rn{3}):f \geq 0, \,\, \norm{f}_{\lp{1}}\geq D_0, \,\,\int_{\rn{3}}f(1+\abs{v}^2+\log(1+f))dv\leq E_0\right\},
\end{equation*}
for some positive constants $D_0$ and $E_0$, as the minimal function space for the Cauchy problem~\eqref{eq:Boltzmann}. It is therefore commonly assumed---see, for example, \cite{Vil98,CarlCarvL09,AMUXY12,P-CT18,Alo19}---that the initial data $f_0$ in~\eqref{eq:Boltzmann} is contained at least in $\mathcal{U}(D_0,E_0)$. 

To briefly remark on notation, we note that we use $L^p(\RR^3), L^\infty(\RR^3), H^s(\RR^3)$ as the usual Lebesgue and Sobolev spaces. 
We also frequently use the following weighted spaces:
\begin{align*}
    \wlp{p}{w}(\RR^3) &\coloneqq \left\{f:\norm{f}_{\wlp{p}{w}(\RR^3)}\coloneqq\norm{\jap{\cdot}^w f}_{\lp{p}(\RR^3)}<\infty\right\}
\qquad
   \jap{\cdot}\coloneqq\sqrt{1+\abs{\cdot}^2}
    \intertext{and}
    \whp{s}{r}(\RR^3) &\coloneqq \left\{f:\norm{f}_{\whp{s}{r}(\RR^3)}\coloneqq\norm{\jap{\cdot}^rf}_{\hp{s}(\RR^3)}<\infty\right\}.
\end{align*}

The main purpose of this work is to address the $\lp{p}$ generation and propagation problems for solutions to~\eqref{eq:Boltzmann} with soft potentials and without angular cutoff. Such estimates have been established in~\cite{Alo19} for the hard potentials and our main contribution is to extend them to the soft potentials. In particular, we provide a priori $\lp{p}$ estimates for solutions to~\eqref{eq:Boltzmann} with suitable initial data $f_0$ not necessarily belonging to $\lp{p}$ (see Theorems~\ref{thm:Lp-weak-soft}-\ref{thm:L-p-strong-soft} for $p < \infty$ and Theorems~\ref{thm:L-infty-strong}-\ref{thm-Linfty-weak-soft} for $p = \infty$). Note that similar to~ \cite{Alo19}, our a priori estimates are derived based on the weak formulation of the equation rather than its local strong form. Hence they will be suitable tools to study weak solutions to the Boltzmann equation.

Under an angular cutoff assumption, the propagation of $\lp{p}$-norms for solutions to the Boltzmann equation has been studied by many authors (see for example \cite{Ark72, Ark83, Gus88, MV04, Wen95}). 
There are less results regarding the propagation/generation of the $L^p$-norms for the Boltzmann equation without angular cutoff, especially for the regime of very soft potentials where $\gamma < - 2s$ (see \cite{Alo19, Silvestre2016, GIS2023}).
The most relevant work for us is \cite{Alo19}, in which a priori estimates are established to show the generation and propagation of $\lp{p}$-norms for $p\in(1,\infty]$ and $\gam\geq0$. In \cite{Alo19} it is shown that the energy estimates for $\norm{f}_{L^p}$ can be closed by using
the cancellation lemma \cite{ADVW00} and the coercivity estimate in \cite{AMUXY12} when $p \in (1, \infty)$. The same strategy cannot entirely be imitated when $p=\infty$. Instead, the $\lp{\infty}$ results are obtained in \cite{Alo19} using an adaptation of the De Giorgi level set method.

In this paper we take a similar approach to \cite{Alo19}. 
The difficulties for the soft potential lie in the extra singularity in the collision kernel $|v - v_\ast|^\gamma$ when $v -  v_\ast$ is close to zero. Moreover, contrary to the hard potential where $|v - v_\ast|^\gamma$ generates moments, one loses moments when $\gamma < 0$. As a consequence, we assume that the initial data $f_0$ in~\eqref{eq:Boltzmann} belongs to $\mathcal{U}(D_0,E_0)\cap\wlp{1}{w}$, for some sufficiently large weight $w$, rather than the usual $\mathcal{U}(D_0,E_0)$ alone. The global $L^1_w$-norm propagation shown in \cite{CarlCarvL09} is essential to overcome the difficulty of the moment loss. We would like to comment that upon the completion of our work, we discovered the paper \cite{GIS2023}, where some basic estimates for their approximate $L^p$-norms share a similar spirit to our work.

Solutions in the $L^p$-settings are also extensively studied for the Landau equation (see for example \cite{ABDL2022, ABDL2023, GGL2024, GL2024-1, Silvestre2017} and the references therein). For instance, in~\cite{GL2024-1}, it has been shown (among other results) that $L^p$-norms propagate for the Landau-Coulomb equation for $p > 3/2$. Note that our parameter range of $p >  \frac{3}{3+\gamma+2s}$ when $\gamma < -2s$ is consistent with the Landau equation where $p > 3/2$. It is well known that the Landau equation can be viewed as a grazing limit of the Boltzmann equation as $s \to 1$. In this sense, parameters in the Landau-Coulomb equation corresponds to $(\gamma, s) = (-3, 1)$ which matches with our range of $p > \frac{3}{3+\gamma+2s}$ in $\RR^3$. The result in~\cite{GL2024-1} is recently improved to the boundary point where $p= 3/2$ (see ~\cite{GGL2024}). It will be interesting to investigate whether the borderline case of $p = \frac{3}{3+\gamma+2s}$ holds for the Boltzmann equation with very soft potentials where $\gamma < -2s$.

\section{$\lp{p}$ Theory}\label{sec;Lp theory} We begin this section by recalling several relevant estimates in the literature regarding the homogeneous Boltzmann equation. 

\begin{lemma}[\cite{Alo19}, Section 2] \label{lem:Q-I-J-p}
For any $g \in \mathcal{U}(D_0,E_0)$ and $1 < p < \infty$, denote
\begin{align}
    I_p(g,f) &\coloneqq \int_{\rn{3}\times\rn{3}}\int_{\sn{2}}g_*\left[(f')^p-f^p\right]B(v-v_*,\sigma)d\sigma\, dv_*\, dv,\label{def:I-p}
\\
    J_p(g,f) &\coloneqq \int_{\rn{3}\times\rn{3}}\int_{\sn{2}}g_*\left[(f')^{\frac{p}{2}}-f^{\frac{p}{2}}\right]^2B(v-v_*,\sigma)d\sigma\, dv_*\, dv,\label{def:J-p}
\end{align}
where $B$ satisfies~\eqref{cond:B-1}-\eqref{cond:B-3} for any $\gam\in(-3,1)$. Then the collision operator satisfies
\begin{equation}\label{bound:Q-g-f-general}
    \int_{\rn{3}}Q(g, f)f^{p-1}dv
\leq
   \frac{1}{p'}I_p(g, f)-\frac{1}{\max\{p,p'\}}J_p(g, f),
\qquad
  \frac{1}{p} + \frac{1}{p'} = 1.
\end{equation}
\end{lemma}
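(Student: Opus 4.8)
The plan is to start from the weak formulation \eqref{eqn:weak-form} with the test function $\varphi = f^{p-1}$, which gives
\[
\int_{\rn{3}}Q(g,f)f^{p-1}dv=\int_{\rn{3}\times\rn{3}}\int_{\sn{2}}\bigl((f')^{p-1}-f^{p-1}\bigr)g_*f\,\abs{v-v_*}^\gam b\Bigl(\tfrac{v-v_*}{\abs{v-v_*}}\cdot\sigma\Bigr)d\sigma\, dv_*\, dv.
\]
The first step is a purely algebraic one: for fixed $g_\ast \ge 0$ and $B \ge 0$, I would seek a pointwise inequality bounding the integrand $\bigl((f')^{p-1}-f^{p-1}\bigr)f$ from above by a combination of $(f')^p - f^p$ and $-\bigl((f')^{p/2}-f^{p/2}\bigr)^2$, with the constants $\tfrac{1}{p'}$ and $\tfrac{1}{\max\{p,p'\}}$. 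Concretely, writing $a = f'$ and $c = f$ (both nonnegative), one wants the two-variable inequality
\[
(a^{p-1}-c^{p-1})c \;\le\; \tfrac{1}{p'}\bigl(a^p-c^p\bigr)-\tfrac{1}{\max\{p,p'\}}\bigl(a^{p/2}-c^{p/2}\bigr)^2.
\]
Integrating this against the nonnegative weight $g_\ast B\,d\sigma\,dv_\ast\,dv$ then yields \eqref{bound:Q-g-f-general} directly, with $I_p$ picking up the $a^p-c^p$ term and $J_p$ the square term.

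To prove the pointwise inequality I would reduce to a single variable by homogeneity: if $c=0$ it is trivial, and otherwise divide through by $c^p$ and set $x = a/c \ge 0$, so the claim becomes
\[
x^{p-1}-1 \;\le\; \tfrac{1}{p'}\bigl(x^p-1\bigr)-\tfrac{1}{\max\{p,p'\}}\bigl(x^{p/2}-1\bigr)^2 \qquad \text{for all } x\ge 0.
\]
Equivalently, writing $y = x^{p/2}$ it becomes a statement about $y^{2-2/p}$, $y^2$ and $(y-1)^2$; the most transparent route is probably to keep the variable $x$ and define $F(x)$ to be the right-hand side minus the left-hand side, check $F(1)=0$, and examine $F$'s derivative, isolating the single critical point at $x=1$ and confirming it is a minimum. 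One should track the cases $1<p\le 2$ (so $\max\{p,p'\} = p'$) and $p \ge 2$ (so $\max\{p,p'\} = p$) separately, since the coefficient of the square term changes; a convexity/Taylor argument around $x=1$ should pin down why exactly these constants make the inequality tight to second order, after which monotonicity of the derivative on $(0,1)$ and $(1,\infty)$ closes the global bound.

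The main obstacle I expect is precisely establishing this elementary but non-obvious scalar inequality with the sharp constants: the bound $(x^{p-1}-1) \le \tfrac{1}{p'}(x^p - 1)$ alone is the classical convexity estimate, but here one needs the extra negative quadratic correction $-\tfrac{1}{\max\{p,p'\}}(x^{p/2}-1)^2$ to be absorbable, which forces a careful second-order analysis near $x=1$ and a global argument that the correction does not become too large for $x$ far from $1$ (in either the small or large regime, depending on whether $p<2$ or $p>2$). Everything else—insertion of the test function, nonnegativity of $g_\ast$ and $B$, and recognizing the resulting integrals as $I_p$ and $J_p$—is bookkeeping. Since the statement is attributed to \cite{Alo19} and the kernel assumptions \eqref{cond:B-1}–\eqref{cond:B-3} play no role beyond $B \ge 0$, I would not belabor regularity or integrability issues here; those are handled when $I_p$ and $J_p$ are later estimated.
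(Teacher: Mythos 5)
Your approach is correct and is essentially the argument behind the cited reference \cite{Alo19} (the present paper does not reproduce a proof of this lemma): take $\varphi=f^{p-1}$ in the weak form \eqref{eqn:weak-form}, reduce to the pointwise inequality
\begin{equation*}
c\,(a^{p-1}-c^{p-1}) \;\le\; \frac{1}{p'}\,(a^p-c^p)\;-\;\frac{1}{\max\{p,p'\}}\,(a^{p/2}-c^{p/2})^2, \qquad a,c\ge 0,
\end{equation*}
and integrate against the nonnegative weight $g_*B\,d\sigma\,dv_*\,dv$. The one step you do not actually supply is a proof of this scalar inequality, and the derivative analysis you outline, while workable (one checks $F(1)=F'(1)=0$ and $F''(1)=(p-1)-\tfrac{p^2}{2\max\{p,p'\}}\ge 0$, then argues monotonicity of $F'$ on each side of $1$), is more delicate than necessary. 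A cleaner finish: expand the square and rearrange, so that with $m=\max\{p,p'\}$ the claim becomes
\begin{equation*}
a^{p-1}c \;\le\; \Bigl(\tfrac{1}{p'}-\tfrac{1}{m}\Bigr)a^p \;+\; \Bigl(\tfrac{1}{p}-\tfrac{1}{m}\Bigr)c^p \;+\; \tfrac{2}{m}\,a^{p/2}c^{p/2}.
\end{equation*}
For $p>2$ (so $m=p$) the coefficient of $c^p$ vanishes; dividing by $a^{p/2}$ reduces the claim to $a^{p/2-1}c\le\tfrac{p-2}{p}a^{p/2}+\tfrac{2}{p}c^{p/2}$, which is Young's inequality with conjugate exponents $\tfrac{p}{p-2}$ and $\tfrac{p}{2}$. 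For $1<p<2$ (so $m=p'$) the coefficient of $a^p$ vanishes; dividing by $c^{p/2}$ gives Young's inequality with exponents $\tfrac{p'}{2}$ and $\tfrac{p'}{p'-2}$. The boundary case $p=2$ is the identity $x-1=\tfrac12(x^2-1)-\tfrac12(x-1)^2$. This closes the gap without any global calculus argument, and everything else in your write-up (the choice of test function, the homogeneity reduction $x=a/c$, and the integration step) is exactly right.
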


The estimate of $J_p$ is given in \cite[Proposition 2.1]{AMUXY12} (see \cite[equation (2.9)]{AMUXY12}):
\begin{lemma}[\cite{AMUXY12}]  \label{lem:J-p}
Suppose $g \in \mathcal{U}(D_0,E_0)$ and $1 < p < \infty$. Then $J_p$ defined in~\eqref{def:J-p} satisfies
\begin{equation}\label{bound:J-p}
    J_p(g, f)
\geq 
   C_0\norm{f^{\frac{p}{2}}}_{\whp{s}{\gam/2}}^2
  - C_1\norm{f^{\frac{p}{2}}}_{\wlp{2}{\gam/2}}^2
\end{equation}
for constants $C_0,C_1$ depending only on $D_0,E_0$. 
\end{lemma}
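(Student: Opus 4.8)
The plan is to reduce the bound~\eqref{bound:J-p} to a coercivity estimate for a fractional-diffusion-type quadratic form, following the strategy of \cite{ADVW00, AMUXY12}. Writing $F\coloneqq f^{p/2}$, the functional in~\eqref{def:J-p} becomes
\[
  J_p(g,f)=\int_{\rn{3}\times\rn{3}}\int_{\sn{2}} g_*\vpran{F'-F}^2\abs{v-v_*}^{\gam}\, b\vpran{\frac{v-v_*}{\abs{v-v_*}}\cdot\sigma}\, d\sigma\, dv_*\, dv ,
\]
so it suffices to prove~\eqref{bound:J-p} with $f^{p/2}$ replaced by an arbitrary nonnegative $F$ in the relevant weighted spaces. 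The first step is to extract a uniform ellipticity from the hypothesis $g\in\mathcal{U}(D_0,E_0)$: the mass bound $\norm{g}_{\lp{1}}\geq D_0$ and the energy bound $\int_{\rn{3}} g\abs{v}^2\,dv\leq E_0$ localize a fixed fraction of the mass of $g$ in a ball $B_{R_0}$, and the entropy bound in the definition of $\mathcal{U}$ prevents concentration, so there are constants $\delta_0,R_0>0$ depending only on $D_0,E_0$ and a measurable set $K\subset B_{R_0}$ with $\abs{K}\geq\delta_0$ and $g\geq\delta_0$ on $K$. Since the integrand of $J_p$ is nonnegative, discarding $v_*\notin K$ and using $g_*\geq\delta_0$ reduces the matter to bounding $\int_{\rn{3}}\int_{K}\int_{\sn{2}}\vpran{F'-F}^2\abs{v-v_*}^{\gam}b\, d\sigma\, dv_*\, dv$ from below.

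With $v_*$ confined to $K\subset B_{R_0}$, the singular factor $\abs{v-v_*}^{\gam}$ is comparable to $\jap{v}^{\gam}$ whenever $\abs{v-v_*}\gtrsim1$ (because $\abs{v-v_*}\approx\abs{v}$ for large $\abs{v}$), while on the near-diagonal set $\abs{v-v_*}\lesssim1$ it is singular but locally integrable since $\gam>-3$, and being nonnegative it may be dropped or, if needed, used to strengthen the estimate. The core of the argument is then the analysis of the angular average. Here one may invoke a Bobylev-type identity together with a Fourier-multiplier computation; alternatively, in physical space, one freezes $v_*$ and changes variables $\sigma\mapsto w=v'-v$, using $\abs{v'-v}\approx\tfrac12\abs{v-v_*}\theta$ and $\sin\theta\, b(\cos\theta)\approx b_0\theta^{-1-2s}$, so that $b(\cos\theta)\,d\sigma$ becomes, to leading order, a two-dimensional Gagliardo-type kernel $\approx\abs{v-v_*}^{2s}\abs{w}^{-2-2s}\One_{\abs{w}\lesssim\abs{v-v_*}}$ in the plane orthogonal to $v-v_*$. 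Integrating over $v_*\in K$ reassembles these pieces, together with the contribution of the non-grazing collisions, into the full fractional-Sobolev seminorm, carrying along the weight $\abs{v-v_*}^{\gam}\approx\jap{v}^{\gam}$, and produces the leading term $C_0\norm{F}_{\whp{s}{\gam/2}}^{2}$. The remainders generated along the way, coming from the truncation $\abs{w}\gtrsim\abs{v-v_*}$, from commuting the weight $\jap{\cdot}^{\gam/2}$ past the difference quotient, and from the bounded region in $v$ where no decay of $\abs{v-v_*}^{\gam}$ is available, are each dominated by $C_1\norm{F}_{\wlp{2}{\gam/2}}^{2}$, and collecting everything yields~\eqref{bound:J-p}.

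The step I expect to be the main obstacle is the bookkeeping of the weight, which is genuinely more delicate here than in the hard-potential setting of \cite{Alo19}: the factor $\abs{v-v_*}^{\gam}$ with $\gam<0$ is simultaneously singular on the diagonal and decaying at infinity, and these two effects must be separated cleanly --- the near-diagonal singularity is harmless because $\gam>-3$, but it must be disentangled from the genuinely grazing collisions so as not to corrupt the integrability of the remainder, while the decay at infinity must be matched exactly to the prescribed weight $\jap{\cdot}^{\gam/2}$ rather than a worse power. Recovering the full (isotropic) fractional-Sobolev seminorm $\norm{F}_{\whp{s}{\gam/2}}^{2}$, as opposed to merely the two-dimensional seminorm visible for a single frozen $v_*$, while respecting this weight and while only a finite-measure set $K$ of collision partners is available, is the crux, and is precisely the point at which the loss of moments for soft potentials makes itself felt.
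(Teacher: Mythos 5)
The paper does not prove this lemma; it cites it. Immediately before the statement, the authors write: ``The estimate of $J_p$ is given in \cite[Proposition~2.1]{AMUXY12} (see \cite[equation~(2.9)]{AMUXY12}).'' So there is no in-paper proof against which your attempt can be matched; the lemma is imported wholesale from the literature.

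That said, your sketch is a fair high-level account of the mechanism behind the cited coercivity estimate, whose origins lie in the entropy-dissipation argument of \cite{ADVW00}. Setting $F=f^{p/2}$, restricting $v_\ast$ to a set $K$ of positive measure on which $g$ is bounded below (extracted from the mass, energy, and entropy constraints in $\mathcal{U}(D_0,E_0)$ --- this step is correct and standard), and exploiting the angular singularity of $b$ through a Carleman-type change of variables $\sigma\mapsto w=v'-v$, are all consistent with that literature, and your scaling $b\,d\sigma\approx |v-v_\ast|^{2s}|w|^{-2-2s}\,dw$ is right. You also correctly isolate the crux: upgrading the two-dimensional fractional seminorm visible in the hyperplane orthogonal to $v-v_\ast$ to the full isotropic $\norm{F}_{\whp{s}{\gamma/2}}^2$ while keeping the weight exactly $\jap{v}^{\gamma/2}$. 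In \cite{ADVW00,AMUXY12} this is not carried out in physical space as you describe but via the Bobylev/Fourier identity and a subsequent comparison of the resulting anisotropic norm with the isotropic weighted Sobolev norm; your sentence that ``integrating over $v_\ast\in K$ reassembles these pieces into the full fractional-Sobolev seminorm'' is precisely the assertion that requires a quantitative proof and does not follow merely from nonnegativity of the integrand. In short: there is no gap on the paper's side, since the paper never attempts a proof; on your side the outline is directionally sound but leaves the central two-dimensional to three-dimensional reassembly, with the correct weight, unproved, and the honest completion is to invoke \cite[Proposition~2.1]{AMUXY12} exactly as the authors do.
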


We will also make use of (2.4) from \cite{AMUXY12}:
\begin{lemma}[\cite{AMUXY12}] \label{lem:I-2-AMUXY12}
Suppose $B$ satisfies~\eqref{cond:B-1}-\eqref{cond:B-3}. Then $I_p$ defined in~\eqref{def:I-p} with $p=2$ satisfies
\begin{align*} 
   I_2 (g, f)
\leq 
  C \int_{\rn{3}\times\rn{3}} \abs{g_*} f^2\abs{v-v_*}^\gam dv_*\, dv
\leq
  C \norm{g}_{L^1_{|\gamma|}}
  \norm{f}_{H^{|\gamma|/2}_{\gamma/2}}^2,
\end{align*}
where $C$ depends on $\gamma, s$.
\end{lemma}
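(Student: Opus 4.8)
\emph{Proof plan.} The first inequality is, in essence, the cancellation lemma of \cite{ADVW00}; for the second we would split the kernel $\abs{v-v_*}^\gam$ according to whether $\abs{v-v_*}$ is large or small and then invoke a Hardy inequality. Throughout we take $b$ to be supported in $\theta\in[0,\pi/2]$, as is standard.

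For the first inequality, in the $(f')^2$-part of $I_2(g,f)=\int_{\rn{3}\times\rn{3}}\int_{\sn{2}}g_*\big[(f')^2-f^2\big]B\,d\sigma\,dv_*\,dv$ we would perform, at fixed $(v_*,\sigma)$, the change of variables $v\mapsto v'$, whose Jacobian equals $\tfrac14\cos^2(\theta/2)$ and which satisfies $\abs{v'-v_*}=\abs{v-v_*}\cos(\theta/2)$. After relabeling the integration variable and integrating over $\sigma\in\sn{2}$ this should yield the identity
\begin{equation*}
    I_2(g,f)=C_{b,\gam}\int_{\rn{3}\times\rn{3}}g_*\,f^2\,\abs{v-v_*}^{\gam}\,dv_*\,dv,
\qquad
    C_{b,\gam}=2\pi\int_0^{\pi/2}b(\cos\theta)\sin\theta\,\Big(\cos^{-\gam-3}\tfrac{\theta}{2}-1\Big)\,d\theta.
\end{equation*}
Since $b$ is supported in $[0,\pi/2]$, the factor $\cos(\theta/2)$ is bounded below, so the integrand defining $C_{b,\gam}$ can only be singular at $\theta=0$; there $b(\cos\theta)\sin\theta\approx b_0\theta^{-1-2s}$ while $\cos^{-\gam-3}(\theta/2)-1=O(\theta^2)$, so the product is $O(\theta^{1-2s})$ and integrable precisely because $s<1$. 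Moreover $\gam>-3$ makes the exponent $-\gam-3$ negative, so $\cos^{-\gam-3}(\theta/2)\ge1$ and $C_{b,\gam}\in[0,\infty)$. As $f^2\abs{v-v_*}^\gam\ge0$, bounding $g_*\le\abs{g_*}$ then gives the first inequality.

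The main obstacle is that $\int\int\int g_*(f')^2B$ and $\int\int\int g_*f^2B$ are each only conditionally convergent (both diverge at $\theta=0$), so the splitting of $I_2$ and the change of variables above cannot be carried out directly. We would instead work with a truncated kernel $b\,\mathbf{1}_{\theta>\delta}$, for which every integral is absolutely convergent, establish the identity at level $\delta$, and pass to the limit $\delta\to0$; this is legitimate because $(f')^2-f^2$ produces an $O(\theta^2)$ cancellation once averaged over the azimuthal variable, and $s<1$ then makes the resulting integrand absolutely integrable. This is the argument of \cite{ADVW00}.

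For the second inequality, on the region $\{\abs{v-v_*}\ge1\}$ we would use $\abs{v-v_*}^\gam\le C\jap{v-v_*}^\gam\le C\jap{v}^\gam\jap{v_*}^{\abs{\gam}}$, where the last step follows from $\jap{v}\le\sqrt2\,\jap{v-v_*}\jap{v_*}$ and $\gam<0$; this bounds that piece by $C\norm{g}_{L^1_{\abs{\gam}}}\norm{f}_{L^2_{\gam/2}}^2$. On the region $\{\abs{v-v_*}<1\}$ we have $\jap{v}\approx\jap{v_*}$, so $f^2=\big(\jap{v}^{\gam/2}f\big)^2\jap{v}^{\abs{\gam}}\le C\jap{v_*}^{\abs{\gam}}\big(\jap{v}^{\gam/2}f\big)^2$, and the inner $v$-integral would be controlled by the Hardy inequality $\int_{\rn{3}}\abs{h}^2\abs{v-v_*}^{-\abs{\gam}}\,dv\le C\norm{h}_{\dot H^{\abs{\gam}/2}}^2$---valid since $\abs{\gam}/2<3/2$, with constant independent of $v_*$ by translation invariance---giving a bound $C\norm{g}_{L^1_{\abs{\gam}}}\big\|\jap{\cdot}^{\gam/2}f\big\|_{\dot H^{\abs{\gam}/2}}^2$ for that piece. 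Summing the two pieces and using $\norm{f}_{L^2_{\gam/2}}\le\norm{f}_{H^{\abs{\gam}/2}_{\gam/2}}$ and $\big\|\jap{\cdot}^{\gam/2}f\big\|_{\dot H^{\abs{\gam}/2}}\le\norm{f}_{H^{\abs{\gam}/2}_{\gam/2}}$ yields the stated estimate, with constants depending only on $\gam$ and $s$.
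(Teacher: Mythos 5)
The paper gives no proof of its own here — Lemma~\ref{lem:I-2-AMUXY12} is a direct citation to equation (2.4) of \cite{AMUXY12} — so you are reconstructing an argument from the literature, and your reconstruction is correct. The cancellation-lemma derivation of the first inequality (with the sign and finiteness check on $C_{b,\gam}$ using $\gam>-3$ and $s<1$, and the truncation-and-limit justification via the azimuthal $O(\theta^2)$ cancellation) is exactly the route of \cite{ADVW00}, and the split into $\{|v-v_*|\geq1\}$ (Peetre) and $\{|v-v_*|<1\}$ (Hardy) for the second inequality is the standard argument. Two remarks worth noting. First, the spatial Jacobian $\tfrac14\cos^2(\theta/2)$ together with $|v'-v_*|=|v-v_*|\cos(\theta/2)$ would by themselves give the exponent $-\gam-2$, not $-\gam-3$; the extra factor $\cos^{-1}(\theta/2)$ arises from re-expressing $d\sigma$ in spherical coordinates about the post-collisional axis $(v'-v_*)/|v'-v_*|$ after the change of variables. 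Since you cite \cite{ADVW00} and land on the correct formula, this is acceptable as a sketch, but be aware that this angular re-parametrization is an essential step, not just ``relabeling.'' Second, you correctly appeal to the homogeneous Hardy inequality, which is valid for $|\gam|/2<3/2$ and hence covers the full range $\gam\in(-3,0)$; the paper's own Lemma~\ref{lem;Lp;Hardy's inequality} is stated only for $\ell\in(0,1)$, i.e.\ $\gam>-2$, so your more general version is the one that actually closes the argument across the whole range, and passing from the homogeneous seminorm to $\norm{f}_{H^{|\gamma|/2}_{\gamma/2}}$ is legitimate since the inhomogeneous norm dominates.
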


By replacing $f$ in Lemma~\ref{lem:I-2-AMUXY12} with $f^{p/2}$, we immediately have
\begin{corollary} \label{cor:I-p}
Suppose $B$ satisfies~\eqref{cond:B-1}-\eqref{cond:B-3}. Then $I_p$ defined in~\eqref{def:I-p} satisfies
\begin{align*} 
   I_p (g, f)
\leq 
  C \int_{\rn{3}\times\rn{3}} \abs{g_*} f^p \abs{v-v_*}^\gam dv_*\, dv
\leq
  C \norm{g}_{L^1_{|\gamma|}}
  \norm{f^{p/2}}_{H^{|\gamma|/2}_{\gamma/2}}^2,
\end{align*}
where $p \in (1, \infty)$ and $C$ depends on $\gamma, s$. 
\end{corollary}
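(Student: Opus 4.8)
The plan is to observe that $I_p(g,f)$ is literally $I_2(g,\cdot)$ evaluated at the function $f^{p/2}$, so the corollary follows by substituting $f^{p/2}$ for $f$ in Lemma~\ref{lem:I-2-AMUXY12}. The one identity that makes this rigorous is that raising to a power commutes with the pre-/post-collisional evaluation: since $f\geq 0$, the function $h\coloneqq f^{p/2}$ is well defined and nonnegative, and $h'\coloneqq h(v')=f(v')^{p/2}=(f')^{p/2}$, hence $(h')^2=(f')^p$ and $h^2=f^p$. Plugging this into the definition~\eqref{def:I-p} gives
\[
  I_p(g,f) = \int_{\rn{3}\times\rn{3}}\int_{\sn{2}} g_* \bigl[(h')^2 - h^2\bigr] B(v-v_*,\sigma)\,d\sigma\,dv_*\,dv = I_2(g, f^{p/2}).
\]

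First I would record this reduction, and then apply Lemma~\ref{lem:I-2-AMUXY12} verbatim with its $f$ replaced by $f^{p/2}$; the kernel $B$ and the function $g$ are untouched, so the hypotheses~\eqref{cond:B-1}--\eqref{cond:B-3} still hold. The first inequality of that lemma yields
\[
  I_p(g,f) = I_2(g,f^{p/2}) \leq C \int_{\rn{3}\times\rn{3}} \abs{g_*}\,\bigl(f^{p/2}\bigr)^2 \abs{v-v_*}^\gam\,dv_*\,dv = C \int_{\rn{3}\times\rn{3}} \abs{g_*}\, f^p \abs{v-v_*}^\gam\,dv_*\,dv,
\]
and the second inequality, again with $f$ replaced by $f^{p/2}$, gives $I_p(g,f)\leq C\norm{g}_{L^1_{|\gamma|}}\norm{f^{p/2}}_{H^{|\gamma|/2}_{\gamma/2}}^2$, which is the asserted bound.

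There is no genuine analytic difficulty here; all of the substantive work is contained in Lemma~\ref{lem:I-2-AMUXY12}. The only points deserving a remark are: (i) the nonnegativity of $f$, which is what makes $f^{p/2}$ and the substitution meaningful, and which is automatic in the Boltzmann setting since $f$ is a density; (ii) the estimate is only informative when its right-hand side is finite, i.e. when $f^{p/2}\in H^{|\gamma|/2}_{\gamma/2}$, and is otherwise vacuously true; and (iii) since Lemma~\ref{lem:I-2-AMUXY12} imposes no restriction on $p$ beyond what is needed to make sense of the substitution, the passage from $p=2$ to arbitrary $p\in(1,\infty)$ costs nothing.
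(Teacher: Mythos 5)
Your proof is correct and follows exactly the paper's own (one-line) argument: the paper obtains the corollary "by replacing $f$ in Lemma~\ref{lem:I-2-AMUXY12} with $f^{p/2}$," which is precisely the reduction $I_p(g,f)=I_2(g,f^{p/2})$ that you justify via the identity $(f^{p/2})(v')=(f')^{p/2}$. Your additional remarks about nonnegativity of $f$ and the vacuous case merely make explicit what the paper leaves implicit.
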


We will make use of the weighted-$L^1$ propagation of the solution as shown in~\cite[Theorem 1]{CarlCarvL09}:
\begin{theorem} [\cite{CarlCarvL09}] \label{thm:L1-prop}
Suppose $w > 2$ and $B$ satisfies~\eqref{cond:B-1}-\eqref{cond:B-3} with $\gam\in(-3,1)$. Then
\begin{equation}\label{prop:L1}
    \norm{f(t)}_{\wlp{1}{w}} \leq C_w(1+t),\quad \text{for all } t>0.
\end{equation}
\end{theorem}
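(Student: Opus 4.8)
The plan is to close a differential inequality for the weighted mass $m_w(t)\coloneqq\norm{f(t)}_{\wlp1w}=\int_{\rn3}\jap v^w f(t,v)\,dv$ and integrate it in time. Testing the equation against $\varphi(v)=\jap v^w$ and invoking the weak formulation \eqref{Q:weak-form}, after symmetrizing in $v\leftrightarrow v_*$, gives
\begin{equation*}
   \dv t m_w(t)=c_0\int_{\rn3\times\rn3}\int_{\sn2}\bigl(\jap{v'}^w+\jap{v_*'}^w-\jap v^w-\jap{v_*}^w\bigr)\,f f_*\,\abs{v-v_*}^\gam\,b\bigl(\tfrac{v-v_*}{\abs{v-v_*}}\cdot\sigma\bigr)\,d\sigma\,dv_*\,dv
\end{equation*}
for an absolute constant $c_0>0$; this identity is made rigorous by running the computation on a regularized kernel (angular truncation together with a truncation of $\abs{v-v_*}^\gam$ near the diagonal) and passing to the limit using the a priori bounds on $f$. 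Everything is then in estimating the right-hand side.

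The next ingredient is a non-cutoff Povzner-type inequality for the inner angular integral. Its mechanism is the convexity of $\jap\cdot^w$ with $w>2$: integrating the linear part of the Taylor expansion of $\jap{v'}^w+\jap{v_*'}^w-\jap v^w-\jap{v_*}^w$ in $\sigma$ yields a quantity which, by gradient monotonicity of the convex weight, is nonnegative, of order $\jap v^w$ when the velocities are in general position (for instance $\jap{v_*}\lesssim\jap v$), and which degenerates to $O\bigl(\jap v^{w-2}\abs{v-v_*}^2\bigr)$ near the diagonal $v=v_*$; the remaining (higher-order and non-grazing) contributions are bounded by mixed terms $\jap v^a\jap{v_*}^b$ with $a+b=w$, $a,b\le w-1$, plus the diagonal remainder $\jap v^{w-2}\abs{v-v_*}^2\One_{\abs{v-v_*}<1}$, which is integrable against $b$ because $\sin\theta\,b(\cos\theta)\abs{\sigma-\hat u}^2\sim\theta^{1-2s}$ with $s<1$. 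Thus, up to the $\abs{v-v_*}^\gam$ factor, the right-hand side is dominated by a coercive term of order $-\jap v^w$ plus these strictly lower-order mixed terms and the integrable diagonal remainder.

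The core of the matter---and the step I expect to be the main obstacle, since it is exactly what separates the soft case from the hard-potential case of \cite{Alo19}---is the interplay of this structure with $\abs{v-v_*}^\gam$ for $\gam<0$: this factor is singular on the diagonal (though integrable there, since $\gam>-3$) and decays at infinity, so, unlike for hard potentials, it produces no moments and the associated dissipation is weak. I would split each integral according to $\{\abs{v-v_*}\ge1\}$ versus $\{\abs{v-v_*}<1\}$. For the coercive contribution, mass and energy conservation provide a fixed $R=R(D_0,E_0)$ with $\int_{\abs{v_*}\le R}f(t,v_*)\,dv_*\ge D_0/2$, whence $\int_{\abs{v-v_*}\ge1}\abs{v-v_*}^\gam f_*\,dv_*\ge c\jap v^\gam$ for $\abs v\ge R+1$, so the coercive term is bounded above by $-c\,m_{w+\gam}(t)+C$. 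On $\{\abs{v-v_*}\ge1\}$ the kernel is $\le1$ and the mixed terms contribute products of weighted masses, each factor of order $\le w-1$ and thus interpolable between $\norm{f_0}_{\lp1}$, $m_2\le E_0$ and $m_w$. On $\{\abs{v-v_*}<1\}$ one has $\jap v\approx\jap{v_*}$ and the delicate term is the diagonal piece $\jap v^{w-2}\abs{v-v_*}^{2+\gam}$: for $-2<\gam<0$ the exponent $2+\gam\ge0$, so integration in $v_*$ costs only $\norm f_{\lp1}$; for very soft potentials $\gam\le-2$ the exponent $2+\gam\in(-1,0]$ is still $>-3$, and the residual singular integral is absorbed using the mass and entropy information carried by $\mathcal U(D_0,E_0)$ and the space-time regularity furnished by \eqref{eqn;intro;H theorem}---this is where the $L\log L$ hypothesis genuinely enters and is the technically hardest point.

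Collecting these estimates produces a differential inequality of the form $\dv t m_w(t)\le -c\,m_{w+\gam}(t)+\Psi\bigl(m_w(t),t\bigr)$, where $\Psi$ gathers the mixed and diagonal contributions. When $\gam$ is close to $0$ these are dominated, after interpolation with the conserved mass and energy and absorbing part into the coercive term by Young's inequality, so that $\dv t m_w(t)\le C_w$, which integrates to $m_w(t)\le m_w(0)+C_w t=\norm{f_0}_{\wlp1w}+C_w t\le C_w(1+t)$; for the remaining range of $\gam\in(-3,0)$ the same conclusion follows by an induction over the weight, using that lower weighted masses have already been shown to grow at most linearly in $t$. For $\gam\ge0$ the argument reduces to the classical one---no diagonal singularity, $\abs{v-v_*}^\gam$ creates moments, the coercive term dominates $m_w$ itself---and yields a bound uniform in $t$. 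The availability of only linear-in-$t$ growth for $\gam<0$ is structural: the dissipation controls the weaker quantity $m_{w+\gam}$ with $w+\gam<w$, so it cannot be fed back to cancel the growth of $m_w$.
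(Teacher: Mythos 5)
The paper does not prove this statement: it is quoted verbatim (with label and citation) as Theorem~1 of \cite{CarlCarvL09} and used as a black-box input. There is therefore no in-paper proof to compare your argument against; whatever you wrote has to stand on its own merits and on the record in \cite{CarlCarvL09}.

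On those merits, your sketch follows the expected Povzner-type route (test with $\jap{v}^w$, symmetrize via \eqref{Q:weak-form}, Povzner decomposition for the angular integral with the grazing cancellation $\sin\theta\,b\,(1-\cos\theta)\sim\theta^{1-2s}$ integrable, split $\{\abs{v-v_*}\ge1\}$ from $\{\abs{v-v_*}<1\}$, close a differential inequality), and the structural explanation you give for the merely linear growth in $t$ for $\gam<0$ (dissipation only controls $m_{w+\gam}\le m_w$) is correct. That said, this is a plan rather than a proof, and you yourself flag the decisive gap: for $\gam\le -2$ the near-diagonal remainder $\jap v^{w-2}\abs{v-v_*}^{2+\gam}\One_{\abs{v-v_*}<1}$ convolved against $f_*$ is not controlled by $L^1$ alone, and you only assert (``this is where the $L\log L$ hypothesis genuinely enters and is the technically hardest point'') that the entropy bound plus \eqref{eqn;intro;H theorem} absorbs it without giving the argument. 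This is precisely where a written-out proof must do real work: the splitting $f_*=f_*\One_{f_*<M}+f_*\One_{f_*\ge M}$ controls the tail mass $\int f_*\One_{f_*\ge M}\,dv_*\le E_0/\log(1+M)$, but it does not automatically tame the convolution with the singular kernel $\abs{\cdot}^{2+\gam}$ unless you also exploit smallness of the near-diagonal ball or an extra integrability gain (which is exactly what the paper avoids by delegating the claim to \cite{CarlCarvL09}). If you wish to make the proposal self-contained you should either carry out that estimate carefully or state explicitly that you are invoking \cite[Theorem~1]{CarlCarvL09} as the authors do.
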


For the convenience of the reader, we also recall two classical inequalities. 
\begin{lemma} (Hardy) \label{lem;Lp;Hardy's inequality}
    Let $\ell\in(0,1)$ and $d>2\ell$. Then, for any $F:\rn{d}\to\mathbb{R}$ in $\hp{\ell}(\rn{d})$,
    \begin{equation*}
        \sup_{v_*}\int_{\rn{d}}\frac{F(v)^2}{\abs{v-v_*}^{2\ell}}dv \leq C\norm{F}_{\hp{\ell}}^2,
\qquad
\text{for some $C>0$.}
    \end{equation*}
\end{lemma}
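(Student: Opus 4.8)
The plan is to reduce the statement, via translation invariance and the Fourier transform, to the standard homogeneous fractional Hardy inequality, and then to prove the latter by a scaling-invariant Schur test; alternatively, one may simply invoke the Stein--Weiss inequality with the appropriate exponents. First I would remove the supremum and the weight center by translation invariance: the change of variables $v\mapsto v+v_*$ turns $\int_{\rn{d}}F(v)^2\abs{v-v_*}^{-2\ell}\,dv$ into $\int_{\rn{d}}F(\cdot+v_*)^2\abs{\cdot}^{-2\ell}$, while $\norm{F(\cdot+v_*)}_{\hp{\ell}}=\norm{F}_{\hp{\ell}}$ since translation only multiplies $\widehat{F}$ by a unimodular factor. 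Hence it suffices to treat the case $v_*=0$ with a constant independent of $v_*$. Moreover $\abs{\xi}^{2\ell}\le\jap{\xi}^{2\ell}$ gives $\norm{\abs{\nabla}^\ell F}_{\lp{2}}\le\norm{F}_{\hp{\ell}}$, so the claim reduces to the homogeneous bound $\int_{\rn{d}}F(v)^2\abs{v}^{-2\ell}\,dv\le C\norm{\abs{\nabla}^\ell F}_{\lp{2}}^2$ for $F\in\hp{\ell}(\rn{d})$.

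To establish this, set $g\coloneqq\abs{\nabla}^\ell F\in\lp{2}(\rn{d})$, so that $F=I_\ell g$ almost everywhere, where $I_\ell$ is the Riesz potential of order $\ell$, determined by $\widehat{I_\ell g}(\xi)=\abs{\xi}^{-\ell}\widehat{g}(\xi)$ and given, for nice $g$, by $I_\ell g(v)=c_{d,\ell}\int_{\rn{d}}\abs{v-y}^{\ell-d}g(y)\,dy$. Writing $K(v,y)\coloneqq\abs{v}^{-\ell}\abs{v-y}^{\ell-d}\ge0$ and $Tg(v)\coloneqq c_{d,\ell}\int_{\rn{d}}K(v,y)g(y)\,dy$, we then have $\int_{\rn{d}}F(v)^2\abs{v}^{-2\ell}\,dv=\norm{Tg}_{\lp{2}}^2$. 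The kernel $K$ is homogeneous of degree $-d$, i.e.\ $K(\lambda v,\lambda y)=\lambda^{-d}K(v,y)$ for $\lambda>0$, so $T$ is bounded on $\lp{2}(\rn{d})$ as soon as the Schur test holds with the scaling-invariant weight $\phi(v)=\abs{v}^{-d/2}$, namely $\int_{\rn{d}}K(v,y)\phi(y)\,dy\le C\phi(v)$ and $\int_{\rn{d}}K(v,y)\phi(v)\,dv\le C\phi(y)$.

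By homogeneity, the substitution $y=\abs{v}z$ (respectively $v=\abs{y}z$) reduces each of these to a fixed dimensionless integral $\int_{\rn{d}}\abs{z}^{-a}\abs{z-e}^{\ell-d}\,dz$ with $e$ a unit vector and $a\in\{d/2,\,d/2+\ell\}$, which converges near $z=0$ because $a<d$, near $z=e$ because $d-\ell<d$ (using $\ell>0$), and near infinity because $a+(d-\ell)>d$; the hypothesis $d>2\ell$ is precisely what secures the borderline endpoint in each case (infinity when $a=d/2$, the origin when $a=d/2+\ell$). Thus $\norm{Tg}_{\lp{2}}\le C\norm{g}_{\lp{2}}$, and chaining the reductions yields
\begin{equation*}
  \sup_{v_*}\int_{\rn{d}}\frac{F(v)^2}{\abs{v-v_*}^{2\ell}}\,dv\le C\,\norm{\abs{\nabla}^\ell F}_{\lp{2}}^2\le C\,\norm{F}_{\hp{\ell}}^2 .
\end{equation*}

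The only genuinely delicate point is the verification of the two Schur integrals: one must check convergence at $z=0$, at $z=e$, and at infinity simultaneously, and it is exactly the behaviour at the borderline endpoint that forces the constraint $d>2\ell$ in the hypothesis. Everything else — the translation reduction, the passage from $\hp{\ell}$ to the homogeneous Sobolev norm, and the identity $F=I_\ell(\abs{\nabla}^\ell F)$ — is routine.
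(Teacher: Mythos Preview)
Your argument is correct. The translation reduction is immediate, the passage from $\hp{\ell}$ to the homogeneous seminorm is clear, and the Schur test with weight $\phi(v)=\abs{v}^{-d/2}$ goes through exactly as you describe: for the first integral (with $a=d/2$) the constraint at infinity is $d/2+(d-\ell)>d\iff d>2\ell$, and for the second (with $a=d/2+\ell$) the constraint at the origin is $d/2+\ell<d\iff d>2\ell$, while the remaining singularities are controlled by $\ell\in(0,1)$ alone. The representation $F=I_\ell(\abs{\nabla}^\ell F)$ is justified for $F\in\hp{\ell}$ since $\abs{\nabla}^\ell F\in\lp{2}$ and $I_\ell:\lp{2}\to\lp{2d/(d-2\ell)}$ is bounded precisely when $d>2\ell$, so the convolution formula makes sense and agrees with $F$ as a tempered distribution.

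As for comparison with the paper: the paper does not prove this lemma. It is simply recalled, along with the Hardy--Littlewood--Sobolev inequality, as one of ``two classical inequalities'' stated for the reader's convenience, and is then used only implicitly in the subsequent estimates. So there is no paper proof to compare against; you have supplied a self-contained argument where the paper gives none. Your approach via the Schur test is one of the standard routes (the Stein--Weiss inequality you mention being essentially equivalent), and is entirely appropriate here.
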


\begin{lemma} (Hardy-Littlewood-Sobolev)
Suppose $0 < \alpha < d$ and $1 < p < q < \infty$. Then 
\begin{align*}
   \norm{f \ast |\cdot|^{\alpha - d}}_{L^q(\rn{d})} \leq C \norm{f}_{L^p(\rn{d})}, 
\qquad
  \frac{1}{q} = \frac{1}{p} - \frac{\alpha}{d}. 
\end{align*}
\end{lemma}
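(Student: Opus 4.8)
The plan is to prove this classical Hardy--Littlewood--Sobolev inequality by the real-variable method of Hedberg, which reduces the $L^p\to L^q$ bound for the Riesz potential $I_\alpha f(x) \coloneqq \big(f \ast |\cdot|^{\alpha-d}\big)(x)$ to the Hardy--Littlewood maximal function theorem. Write $K(y) = |y|^{\alpha-d}$, with $\alpha - d < 0$. The key structural fact is that $K$ lies in no Lebesgue space $L^r(\rn d)$: it belongs only to the weak (Lorentz) space $L^{r,\infty}(\rn d)$ with $r = d/(d-\alpha)$, since $|\{y : |y|^{\alpha-d} > \lambda\}| = c_d\,\lambda^{-d/(d-\alpha)}$. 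This endpoint failure is precisely the main obstacle: Young's convolution inequality does not apply directly, and the loss must be recovered either by Marcinkiewicz interpolation of weak-type bounds or, as I would do here, by a splitting-and-optimization argument.

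First I would split, for a parameter $R>0$ to be chosen pointwise,
\[
   |I_\alpha f(x)| \le \int_{|y|<R} |f(x-y)|\,|y|^{\alpha-d}\,dy + \int_{|y|\ge R} |f(x-y)|\,|y|^{\alpha-d}\,dy \eqqcolon A(x) + B(x).
\]
For the near part $A(x)$, decompose $\{|y|<R\}$ into dyadic annuli $\{2^{-k-1}R \le |y| < 2^{-k}R\}$, $k\ge 0$, bound $|y|^{\alpha-d}$ by $(2^{-k-1}R)^{\alpha-d}$ on each, and use $\int_{|y|<\rho}|f(x-y)|\,dy \le c_d\,\rho^d\,Mf(x)$; summing the resulting geometric series, which converges exactly because $\alpha>0$, gives $A(x) \le C\,R^{\alpha}\,Mf(x)$. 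For the far part $B(x)$, Hölder's inequality with exponents $p$ and $p'$ gives $B(x) \le \norm{f}_{L^p}\big(\int_{|y|\ge R}|y|^{(\alpha-d)p'}\,dy\big)^{1/p'}$; the integral is finite iff $(d-\alpha)p'>d$, equivalently $\tfrac1p>\tfrac\alpha d$, equivalently $q<\infty$, which is the standing hypothesis, and in that case it equals a constant times $R^{\alpha-d/p}$, so $B(x)\le C\,\norm{f}_{L^p}\,R^{\alpha-d/p}$.

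Since $\alpha>0$ while $\alpha-d/p<0$ (again because $q<\infty$), the bounds on $A$ and $B$ are respectively increasing and decreasing in $R$, so choosing $R=\big(\norm{f}_{L^p}/Mf(x)\big)^{p/d}$ balances them and yields, using $\alpha p/d = 1-p/q$ from $\tfrac1q=\tfrac1p-\tfrac\alpha d$, the pointwise estimate
\[
   |I_\alpha f(x)| \le C\,\norm{f}_{L^p}^{1-p/q}\,\big(Mf(x)\big)^{p/q}.
\]
Raising to the $q$-th power, integrating over $\rn d$, and invoking the Hardy--Littlewood maximal inequality $\norm{Mf}_{L^p}\le C\norm{f}_{L^p}$---valid precisely because $p>1$---gives $\norm{I_\alpha f}_{L^q}^q \le C\,\norm{f}_{L^p}^{q-p}\norm{Mf}_{L^p}^{p} \le C\,\norm{f}_{L^p}^{q}$, and taking $q$-th roots finishes the proof. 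The single black box is the maximal function theorem, which if full self-containedness were required follows in turn from the Vitali covering lemma (for the weak-$(1,1)$ bound), the trivial $L^\infty$ bound, and Marcinkiewicz interpolation; alternatively one could obtain the sharp constant through Lieb's rearrangement argument via the Riesz rearrangement inequality, but since only an unspecified constant $C$ is asserted here, the maximal-function route is the most economical.
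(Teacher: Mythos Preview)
Your proof is correct: this is precisely Hedberg's argument, and every step checks out. The splitting at scale $R$, the dyadic estimate $A(x)\le C R^\alpha Mf(x)$, the convergence condition $(d-\alpha)p'>d\iff q<\infty$ for the far piece, the optimization in $R$, and the final appeal to the $L^p$-boundedness of the maximal operator (requiring $p>1$) are all handled cleanly.

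There is nothing to compare against in the paper: the lemma is stated there as one of two ``classical inequalities'' recalled for the reader's convenience, with no proof given. So your proposal supplies a complete argument where the paper simply cites the result. The only cosmetic remark is that the optimization tacitly assumes $0<Mf(x)<\infty$; the excluded cases are trivial (if $Mf(x)=0$ then $f=0$ a.e., and $Mf<\infty$ a.e.\ for $f\in L^p$), but you might note this in a final write-up.
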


In what follows, we often make use of the Sobolev embedding:
\begin{align} \label{Sobolev}
  \norm{f}_{L^{p_s}_{\gamma/2}(\RR^3)}
\leq
  C \norm{f^{p/2}}_{H^s_{\gamma/2}(\RR^3)}^{2/p}, 
\qquad
  p_s = \frac{3p}{3-2s}.
\end{align}

\medskip
Our first set of estimates is to bound $I_p$, which will be the main ingredient in bounding the collision term in the energy estimates. 
\begin{lemma} \label{lem: Ip-bound}
Let $B$ satisfy~\eqref{cond:B-1}-\eqref{cond:B-3}, $f$ be sufficiently smooth, and $I_p$ be as in~\eqref{def:I-p}. Then
\begin{enumerate}[i)]
\item for $\gam\in(-2s,0)$ and $p\in(1,\infty)$,  
\begin{equation}\label{bound:Ip-weak-soft}
  I_p(g,f) 
\leq 
  C \norm{g}_{L^1_{|\gamma|}}
  \norm{f^{\frac{p}{2}}}_{H^{|\gamma|/2}_{\gamma/2}}^2,
\end{equation}
where $C$ depends on $\gamma, s$;

\item for $\gam\in(-3,-2s]$ and $p\in\left(\frac{3}{3+\gamma+2s},\frac{3}{3+\gam}\right)$, there exist a constant $C$, a weight $w$, and $\theta_1 \in (0, 1)$, all explicitly computable, such that
\begin{equation}\label{bound:Ip-strong-1}
   I_p(g,f) 
\leq
  C \norm{g}_{\lp{p}} 
  \norm{f}_{L^1_w}^{p \theta_1} \norm{f^{\frac{p}{2}}}_{H^s_{\gamma/2}}^{2(1-\theta_1)};
\end{equation}

\item for $\gam\in(-3,-2s]$ and $p\in\left(\frac{3}{3+\gamma+2s},\infty\right)$, if in addition, $g\in\lp{p_0}$ for some $p_0\in\left(\frac{3}{3+\gamma+2s}, \,\, \frac{3}{3+\gamma}\right)$, 
then there exist a constant $C$, a weight $w$, and $\theta_2 \in (0, 1)$, all explicitly computable, such that
\begin{equation}\label{bound:Ip-strong-2}
    I_p(g,f) 
\leq 
  C \norm{g}_{\lp{p_0}}
  \norm{f}_{L^1_w}^{p \theta_2} \norm{f^{\frac{p}{2}}}_{H^s_{\gamma/2}}^{2(1-\theta_2)}.
\end{equation}
\end{enumerate}
The constants $C$ in~\eqref{bound:Ip-strong-1}-\eqref{bound:Ip-strong-2} only depend on $\gamma, s, p, p_0$. In particular, they are independent of~$f, g$.
\end{lemma}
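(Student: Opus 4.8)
The starting point is the elementary pointwise inequality $(f')^p - f^p \le p (f')^{p-1} (f' - f) + \text{(sign-definite remainder)}$, but it is cleaner to work directly from the definition of $I_p$ and the fact that $g_* \ge 0$: write $(f')^p - f^p = \big((f')^{p/2} - f^{p/2}\big)\big((f')^{p/2} + f^{p/2}\big)$ and estimate the whole integral after a suitable change of variables, or — following \cite{AMUXY12} and the derivation already cited in Corollary~\ref{cor:I-p} — reduce $I_p(g,f)$ to a quantity controlled by $\int_{\rn{3}\times\rn{3}} g_* \, f^p \, |v - v_*|^\gamma \, dv_*\, dv$. Indeed Corollary~\ref{cor:I-p} already gives exactly this reduction, so the entire lemma is really a collection of different ways of bounding the convolution-type integral
\begin{equation*}
   \mathcal{K}(g,f) \coloneqq \int_{\rn{3}\times\rn{3}} g_* \, f^p \, |v-v_*|^\gamma \, dv_* \, dv = \int_{\rn{3}} f(v)^p \, \big(g \ast |\cdot|^\gamma\big)(v) \, dv.
\end{equation*}
Part i) is then immediate from Corollary~\ref{cor:I-p} itself (taking $|\gamma|/2 \le s$ is automatic when $\gamma \in (-2s,0)$, so $H^{|\gamma|/2}_{\gamma/2}$ is controlled by $H^s_{\gamma/2}$ and the stated bound holds verbatim). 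So the work is entirely in parts ii) and iii), where $\gamma \le -2s$ makes the kernel $|v-v_*|^\gamma$ too singular for the Hardy-type argument of Lemma~\ref{lem;Lp;Hardy's inequality} to apply directly.

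For parts ii) and iii) the plan is to estimate $\mathcal{K}(g,f)$ by Hölder in $v$ followed by Hardy--Littlewood--Sobolev on the convolution. Split off a weight: write $f^p = \big(\jap{v}^{-a} f^{p/2}\big)\big(\jap{v}^{a} f^{p/2}\big)$ for a weight $a$ to be chosen, and pair the first factor with the convolution. More precisely, by Hölder with exponents to be pinned down,
\begin{equation*}
   \mathcal{K}(g,f) \le \big\| g \ast |\cdot|^\gamma \big\|_{L^{q}} \, \big\| f^{p/2} \big\|_{L^{r_1}_{\gamma/2}} \, \big\| f^{p/2} \big\|_{L^{r_2}},
\end{equation*}
with $1/q + 1/r_1 + 1/r_2 = 1$. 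For the convolution factor, HLS gives $\|g \ast |\cdot|^\gamma\|_{L^q} \le C \|g\|_{L^{p_*}}$ whenever $\tfrac1q = \tfrac{1}{p_*} - \tfrac{3+\gamma}{3}$ with $0 < -\gamma < 3$ and $1 < p_* < q < \infty$; this forces $p_* < \tfrac{3}{3+\gamma}$, which is exactly why $p$ (in ii), where $g = f$ so $p_* = p$) and $p_0$ (in iii)) are required to lie below $\tfrac{3}{3+\gamma}$. The lower constraint $p, p_0 > \tfrac{3}{3+\gamma+2s}$ is what makes $q$ large enough that the remaining two $L^{r_i}$ norms of $f^{p/2}$ can be controlled: one of them via the Sobolev embedding \eqref{Sobolev} $\|f^{p/2}\|_{L^{r}_{\gamma/2}} \le C\|f^{p/2}\|_{H^s_{\gamma/2}}$ valid for $r \le \tfrac{6}{3-2s}$, and the other — after interpolating between $L^1$ and $H^s \hookrightarrow L^{6/(3-2s)}$ — in terms of $\|f^{p/2}\|_{L^1}^{\theta} \|f^{p/2}\|_{H^s_{\gamma/2}}^{1-\theta} = \|f\|_{L^{p/2}_{p/2}}^{\ldots}$ and ultimately $\|f\|_{L^1_w}^{p\theta}$ by bounding $\|f\|_{L^{p/2}}$ or $\|f\|_{L^p}$ by a weighted $L^1$ norm when $p/2 \le 1$ (or by plain interpolation plus moment absorption when $p/2 > 1$, which is where the genuine weight $w$ and the exponent $\theta_1, \theta_2 \in (0,1)$ are produced). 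Collecting exponents and demanding that the powers of $\|f^{p/2}\|_{H^s_{\gamma/2}}$ sum to $2(1-\theta_i)$ and the $L^1_w$ powers to $p\theta_i$ is the bookkeeping that yields \eqref{bound:Ip-strong-1}–\eqref{bound:Ip-strong-2}.

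The main obstacle is the simultaneous exponent matching: one must choose the weight parameter $a$, the Hölder exponents $q, r_1, r_2$, and the interpolation parameter $\theta_i$ so that (a) HLS is applicable ($p_* < 3/(3+\gamma)$, giving the stated upper endpoint), (b) the Sobolev exponent constraint $r \le 6/(3-2s)$ is met (this is where $p > 3/(3+\gamma+2s)$ is forced, since a smaller $p$ leaves too much integrability to absorb), (c) the weight $\gamma/2$ lands correctly on the $H^s$-norm — here one has to track carefully that the weight $\jap{v}^{\gamma/2}$ from the collision kernel $|v-v_*|^\gamma \lesssim \jap{v}^\gamma \jap{v_*}^{|\gamma|}$ near infinity is absorbed into $\|f^{p/2}\|_{H^s_{\gamma/2}}$ and the compensating $\jap{v_*}^{|\gamma|}$ into $\|g\|_{L^1_{|\gamma|}}$ or, in the strong cases, is dominated trivially on the region $|v-v_*| \ge 1$ while the singular region $|v-v_*| \le 1$ is handled purely by HLS. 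I expect splitting $|v-v_*|^\gamma = |v-v_*|^\gamma \One_{|v-v_*|\le 1} + |v-v_*|^\gamma \One_{|v-v_*|>1}$ at the outset — controlling the far part by Young's inequality into $\|g\|_{L^1}\|f\|_{L^p}^p$ and the near part by the HLS machinery above — to be the cleanest route, with the whole difficulty concentrated in verifying the admissible parameter window is nonempty precisely when $p$ lies in the stated interval.
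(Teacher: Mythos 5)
Your proposal identifies the right ingredients---reduction via Corollary~\ref{cor:I-p}, Hardy--Littlewood--Sobolev, Sobolev embedding~\eqref{Sobolev}, interpolation, and the parameter window---and correctly recognizes that the upper endpoint $p,p_0<\frac{3}{3+\gamma}$ comes from HLS admissibility while the lower endpoint $p,p_0>\frac{3}{3+\gamma+2s}$ comes from the Sobolev/interpolation constraint. Part~i) is indeed immediate from Corollary~\ref{cor:I-p}. That much matches the paper.

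Where you differ, and where you introduce unnecessary complications, is in the mechanics of ii) and iii). The paper's route is a clean two-step: apply two-factor H\"older to $\int g\,(f^p\ast|\cdot|^\gamma)\,dv_*$ to obtain $I_p\leq C\|g\|_{L^p}\|f^p\ast|\cdot|^\gamma\|_{L^{p'}}$, then apply HLS to the convolution to get $\|f^p\ast|\cdot|^\gamma\|_{L^{p'}}\leq C\|f^p\|_{L^q}$ with $\tfrac1q=2+\tfrac{\gamma}{3}-\tfrac1p$, and finally interpolate $\|f\|_{L^{pq}}$ directly between $L^1_w$ and $L^{p_s}_{\gamma/2}$ (valid since $1<pq<p_s$ by the $p$-range), which instantly produces $\|f\|_{L^1_w}^{p\theta_1}\|f^{p/2}\|_{H^s_{\gamma/2}}^{2(1-\theta_1)}$. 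Your version instead pairs $f^p$ with $g\ast|\cdot|^\gamma$ (equivalent by duality, fine), but then splits $f^p$ into two weighted halves and does a three-factor H\"older, which creates two separate $f^{p/2}$ norms that you then have to re-assemble; in particular the step from $\|f^{p/2}\|_{L^1}$ to $\|f\|_{L^1_w}^{p\theta}$ is not direct---$\|f^{p/2}\|_{L^1}=\|f\|_{L^{p/2}}^{p/2}$, not a power of $\|f\|_{L^1_w}$---and your "bounding $\|f\|_{L^{p/2}}$ or $\|f\|_{L^p}$ by a weighted $L^1$ norm" is exactly the interpolation the paper does once, cleanly, on $\|f\|_{L^{pq}}$. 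The near/far splitting of $|v-v_*|^\gamma$ that you anticipate is also not needed here: HLS applies to the full kernel $|\cdot|^\gamma$ whenever $-3<\gamma<0$, and the admissibility conditions $1<q<p'<\infty$ reduce precisely to $1<p<\frac{3}{3+\gamma}$. (Near/far splitting does appear in the paper, but only later in Lemma~\ref{lem:strong-soft}, which is a different estimate.) Finally, a small slip: you say "in ii), where $g=f$ so $p_*=p$," but $g$ is general in this lemma; the choice $p_*=p$ is simply the H\"older exponent paired with $\|g\|_{L^p}$, not a consequence of $g=f$. None of these is a fatal flaw, but as written the proposal would require substantially more bookkeeping and has a gap in how the $L^1_w$ factor is produced; the paper's direct interpolation of $\|f\|_{L^{pq}}$ closes this in one line.
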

\begin{proof}
{\it i)} The inequality in~\eqref{bound:Ip-weak-soft} follows from Corollary~\ref{cor:I-p} or Lemma~\ref{lem:I-2-AMUXY12}. 

\smallskip
\Ni {\it ii)} The first inequality in Corollary~\ref{cor:I-p} together with the H{\"o}lder's and HLS inequalities gives
\begin{equation*}
  I_p(g,f) 
\leq 
  C \norm{g}_{\lp{p}}\norm{f^p*\abs{\cdot}^{\gam}}_{\lp{p'}}
\leq
  C\norm{g}_{\lp{p}}\norm{f^p}_{\lp{q}}, 
\qquad
  \frac{1}{p} + \frac{1}{p'} = 1, 
\qquad
  \frac{1}{q} = 2+\frac{\gam}{3}-\frac{1}{p}. 
\end{equation*}
Note that $p<\frac{3}{3+\gamma}$ implies $q>1$. Using the H{\"o}lder's inequality and~\eqref{Sobolev}, we have
\begin{align*}
  \norm{f^p}_{\lp{q}}
= \norm{f}_{L^{pq}}^p
\leq
   \norm{f}_{L^1_w}^{p \theta_1} \norm{f}_{L^{p_s}_{\gamma/2}}^{p(1-\theta_1)}
\leq
  \norm{f}_{L^1_w}^{p \theta_1} \norm{f^{\frac{p}{2}}}_{H^s_{\gamma/2}}^{2(1-\theta_1)}, 
\end{align*}
where the weight $w$ is large enough and 
\begin{align*}
  p_s = \frac{3p}{3-2s}, 
\qquad
  \frac{1}{pq} = \theta_1 + \frac{1 - \theta_1}{p_s}.  
\end{align*}
The interpolation of $L^{pq}$ into $L^1_w \cap L^{p_s}_{\gamma/2}$ holds since by the assumption that $p > \frac{3}{3+\gamma+2s}$ and the definition of $q$, we have
\begin{align*}
  1 < pq = \frac{p}{2 + \frac{\gamma}{3} -\frac{1}{p}} < \frac{3 p}{3-2s} = p_s.
\end{align*}
Altogether, we have 
\begin{align*}
   I_p
\leq 
  C\norm{g}_{\lp{p}} 
  \norm{f}_{L^1_w}^{p \theta_1} \norm{f^{\frac{p}{2}}}_{H^s_{\gamma/2}}^{2(1-\theta_1)}.
\end{align*}

\Ni {\it iii)} Similar to the argument in {\it ii)}, we have
\begin{align*} 
   I_p(g,f) 
\leq 
   \norm{g}_{\lp{p_0}}\norm{f^p*\abs{\cdot}^\gam}_{\lp{p_0'}} \nn
\leq
   C\norm{g}_{\lp{p_0}}\norm{f^p}_{\lp{q_0}}, 
\qquad
  \frac{1}{p} + \frac{1}{p'} = 1, 
\qquad
   \frac{1}{q_0}=2+\frac{\gam}{3}-\frac{1}{p_0}. 
\end{align*}
The range of $p_0$ guarantees that $q_0 > 1$ and
\begin{align*}
   1 < p q_0 = \frac{p}{2 + \frac{\gamma}{3} - \frac{1}{p_0}} < \frac{3p}{3 - 2s} = p_s, 
\qquad
  \forall p > 1. 
\end{align*}
Then, following the same interpolation procedure as in part {\it ii)}, we obtain
\begin{equation*}
  I_p
\leq
  C \norm{g}_{\lp{p_0}}
  \norm{f}_{L^1_w}^{p \theta_2} \norm{f^{\frac{p}{2}}}_{H^s_{\gamma/2}}^{2(1-\theta_2)}, 
\qquad
   \frac{1}{p q_0} = \theta_2 + \frac{1 - \theta_2}{p_s}.   \qedhere
\end{equation*}
\end{proof}

\subsection{$\lp{p}$ Generation for $\gam>-2s$}\label{sec;Lp;main results}

We now prove our first result of the generation of $L^p$-norms.
\begin{theorem}\label{thm:Lp-weak-soft}
Let $p\in(1,\infty)$, $D_0,E_0>0$, and $T>0$ be fixed. If $f \geq 0$ is a sufficiently smooth solution to~\eqref{eq:Boltzmann} on $(0,T]\times\rn{3}$ with $f_0 \in \mathcal{U}(D_0,E_0) \cap L^1_w$ where $w$ is large enough and $B$ satisfies~\eqref{cond:B-1}-\eqref{cond:B-3} with $\gam\in(-2s,0)$, then 

\Ni (a) (Generation) there are constants $C$ and $\alpha_1$ such that
\begin{equation} \label{generation-1}
    \norm{f(t)}_{\lp{p}}
+ \int_t^T \norm{f^{\frac{p}{2}}}_{\whp{s}{\gam/2}}^2(\tau) d\tau
\leq 
   C \left(t^{-\alpha_1}+1\right),\quad \text{for all } t\in(0,T];
\end{equation}

\Ni (b) (Propagation) if we further assume that $f_0 \in\mathcal{U}(D_0,E_0)\cap\lp{p}$, then there is a constant $C$ such ~that 
\begin{equation}\label{Lp-prop}
    \sup_{t\in[0,T]}\norm{f(t)}_{\lp{p}} 
\leq  
   C \norm{f_0}_{L^p}.
\end{equation}
The constants $C$ in parts (a)  and (b) depend at most on $T, p,\gam,s,D_0,E_0, \norm{f_0}_{L^1_w}$. 
\end{theorem}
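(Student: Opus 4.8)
The plan is to run an energy estimate on $\frac{d}{dt}\norm{f(t)}_{L^p}^p$ using the weak formulation, following the strategy of \cite{Alo19}, but carefully exploiting the coercivity gain from $J_p$ to absorb the singular loss terms produced by the soft potential.

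\textbf{Step 1: The basic differential inequality.}
Testing the equation with $\varphi = f^{p-1}$ and using Lemma~\ref{lem:Q-I-J-p}, we get
\begin{equation*}
  \frac{1}{p}\frac{d}{dt}\norm{f}_{L^p}^p
  = \int_{\rn{3}} Q(f,f)f^{p-1}\,dv
  \leq \frac{1}{p'}I_p(f,f) - \frac{1}{\max\{p,p'\}}J_p(f,f).
\end{equation*}
By Lemma~\ref{lem:J-p}, $-J_p(f,f) \leq -C_0\norm{f^{p/2}}_{H^s_{\gamma/2}}^2 + C_1\norm{f^{p/2}}_{L^2_{\gamma/2}}^2$, and for $\gamma \in (-2s,0)$ the $L^2_{\gamma/2}$ term is controlled: since $\gamma > -2s$, Hardy's inequality (Lemma~\ref{lem;Lp;Hardy's inequality}) or a direct interpolation bounds $\norm{f^{p/2}}_{L^2_{\gamma/2}}^2$ by $\varepsilon\norm{f^{p/2}}_{H^s_{\gamma/2}}^2 + C_\varepsilon\norm{f^{p/2}}_{L^2}^2 = \varepsilon\norm{f^{p/2}}_{H^s_{\gamma/2}}^2 + C_\varepsilon\norm{f}_{L^p}^p$. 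For the $I_p$ term, apply Lemma~\ref{lem: Ip-bound}(i): $I_p(f,f) \leq C\norm{f}_{L^1_{|\gamma|}}\norm{f^{p/2}}_{H^{|\gamma|/2}_{\gamma/2}}^2$. Since $|\gamma|/2 < s$ (because $\gamma > -2s$), interpolate $H^{|\gamma|/2}_{\gamma/2}$ between $H^s_{\gamma/2}$ and $L^2_{\gamma/2}$, i.e. $\norm{f^{p/2}}_{H^{|\gamma|/2}_{\gamma/2}}^2 \leq \norm{f^{p/2}}_{H^s_{\gamma/2}}^{2|\gamma|/(2s)}\norm{f^{p/2}}_{L^2_{\gamma/2}}^{2(1-|\gamma|/(2s))}$, then use Young's inequality to split off a small multiple of $\norm{f^{p/2}}_{H^s_{\gamma/2}}^2$ plus lower-order terms bounded by $\norm{f}_{L^p}^p$. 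Finally, control $\norm{f}_{L^1_{|\gamma|}}$ by Theorem~\ref{thm:L1-prop} (valid since $w$ is taken large enough, and $|\gamma| < 3 < w$), giving $\norm{f}_{L^1_{|\gamma|}} \leq C(1+t) \leq C(1+T)$.

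\textbf{Step 2: Closing the estimate.}
Combining the above, we arrive at
\begin{equation*}
  \frac{d}{dt}\norm{f}_{L^p}^p + c_0\norm{f^{p/2}}_{H^s_{\gamma/2}}^2 \leq C\norm{f}_{L^p}^p
\end{equation*}
for constants $c_0 > 0$ and $C$ depending only on $T,p,\gamma,s,D_0,E_0,\norm{f_0}_{L^1_w}$. This already gives \textbf{(b) Propagation}: if $f_0 \in L^p$, then dropping the dissipation term and applying Grönwall's inequality yields $\sup_{[0,T]}\norm{f(t)}_{L^p}^p \leq e^{CT}\norm{f_0}_{L^p}^p$, hence \eqref{Lp-prop}. (One should integrate the differential inequality, which in the weak setting means justifying it as an integrated inequality for the solution class; I treat $f$ as "sufficiently smooth" per the hypothesis.)

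\textbf{Step 3: Generation via an ODE comparison.}
For \textbf{(a) Generation}, $f_0 \notin L^p$ is permitted, so we cannot start from a finite initial value. Instead we exploit that the dissipation term controls $\norm{f}_{L^p}^p$ from below via Sobolev embedding: by \eqref{Sobolev}, $\norm{f^{p/2}}_{H^s_{\gamma/2}}^2 \geq c\norm{f}_{L^{p_s}_{\gamma/2}}^2$ with $p_s = 3p/(3-2s) > p$. Since $f \in \mathcal{U}(D_0,E_0)$ controls $\norm{f}_{L^1}$ and a moment, interpolating $L^p$ between $L^1_w$ and $L^{p_s}_{\gamma/2}$ gives $\norm{f}_{L^p}^p \leq C\norm{f^{p/2}}_{H^s_{\gamma/2}}^{2\kappa}$ for some $\kappa > 1$ (using boundedness of $\norm{f}_{L^1_w}$ by Theorem~\ref{thm:L1-prop}). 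Hence $\norm{f^{p/2}}_{H^s_{\gamma/2}}^2 \geq c\,(\norm{f}_{L^p}^p)^{1/\kappa}$ with $1/\kappa < 1$. Writing $y(t) = \norm{f(t)}_{L^p}^p$, we obtain a differential inequality $y' \leq Cy - c\,y^{1/\kappa}$... wait, the sign is wrong for that; rather, with $1/\kappa < 1$ the correct comparison uses the super-solution of $y' = -c\,y^{\beta} + Cy$ with $\beta = 1/\kappa < 1$ — no. Let me restate: from $y' + c\,y^{1/\kappa} \leq Cy$ one cannot conclude boundedness for small $t$. The standard device (as in \cite{Alo19}) is that $\norm{f^{p/2}}_{H^s_{\gamma/2}}^2 \geq c\,y^{1+\delta}$ for some $\delta > 0$ — i.e. the dissipation is \emph{superlinear} in $y$ — giving $y' \leq Cy - c\,y^{1+\delta}$, and comparison with the Bernoulli ODE $z' = -c z^{1+\delta}$ forces $y(t) \leq (c\delta t)^{-1/\delta} + (\text{bounded correction})$, yielding \eqref{generation-1} with $\alpha_1 = 1/(p\delta)$ after accounting for $y = \norm{f}_{L^p}^p$. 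The exponent $\delta$ comes precisely from the Sobolev gain $p_s > p$ combined with the $L^1_w$ interpolation. Integrating the differential inequality in time from $t$ to $T$ after multiplying appropriately also recovers the dissipation integral $\int_t^T \norm{f^{p/2}}_{H^s_{\gamma/2}}^2\,d\tau$ on the left of \eqref{generation-1}.

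\textbf{Main obstacle.}
The delicate point is Step 3: obtaining the \emph{superlinear} lower bound $\norm{f^{p/2}}_{H^s_{\gamma/2}}^2 \gtrsim \norm{f}_{L^p}^{p(1+\delta)}$ with a genuinely positive $\delta$, and tracking that the interpolation constants (which absorb $\norm{f}_{L^1_w}$, hence depend on $T$ through Theorem~\ref{thm:L1-prop}) do not destroy the $t^{-\alpha_1}$ blow-up rate. One must also be careful that in Step 1 the $\varepsilon$-absorption of both the Hardy-type term and the interpolated $I_p$ term into the single coercivity term $c_0\norm{f^{p/2}}_{H^s_{\gamma/2}}^2$ leaves $c_0 > 0$; this uses $\gamma > -2s$ in an essential way (so that $|\gamma|/2 < s$ strictly and the interpolation exponents are in $(0,1)$). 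The time-dependence of $\norm{f}_{L^1_{|\gamma|}}$ only contributes a harmless factor $(1+T)$ since we work on a fixed interval $[0,T]$.
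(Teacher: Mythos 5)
Your proposal follows the paper's argument essentially verbatim: test with $f^{p-1}$, use Lemma~\ref{lem:Q-I-J-p}, absorb $J_p$ via Lemma~\ref{lem:J-p}, bound $I_p$ via Lemma~\ref{lem: Ip-bound}(i) and the interpolation $H^{|\gamma|/2}_{\gamma/2}\hookrightarrow H^s_{\gamma/2}\cap L^2_{\gamma/2}$ (valid precisely because $|\gamma|/2<s$), control the $L^1_{|\gamma|}$ moment by Theorem~\ref{thm:L1-prop}, apply Gr\"onwall for (b), and close (a) by interpolating $L^p$ between $L^1_w$ and $L^{p_s}_{\gamma/2}$ to get a superlinear lower bound on the dissipation and then compare with a Bernoulli-type ODE. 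That is exactly the structure of the paper's proof. Two cosmetic slips worth fixing: (1) you initially write the interpolation exponent as ``$\kappa>1$'' before self-correcting --- in the paper's notation $\|f\|_{L^p}^p \leq C\|f^{p/2}\|_{H^s_{\gamma/2}}^{2\theta_3}$ with $\theta_3\in(0,1)$, so the $H^s$-norm power is strictly \emph{less} than $2$, which is what makes the inverted bound $\|f^{p/2}\|_{H^s_{\gamma/2}}^2 \gtrsim \|f\|_{L^p}^{p/\theta_3}$ superlinear; and (2) your rendering of~\eqref{Sobolev} should read $\|f\|_{L^{p_s}_{\gamma/2}}^p \leq C\|f^{p/2}\|_{H^s_{\gamma/2}}^{2}$, not with exponent $2$ on the left. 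Neither affects the conclusion. The paper keeps the final ODE cleaner by first $\epsilon$-absorbing the $C\|f\|_{L^p}^p$ term into the dissipation so that the right-hand side is a constant ($y'+Ay^{1/\theta_3}\leq C$), whereas you retain $y'\leq Cy - cy^{1+\delta}$; both yield the same super-solution $C(t^{-\alpha_1}+1)$ after an integrating-factor step, with $\alpha_1=\theta_3/(1-\theta_3)$ at the level of $\|f\|_{L^p}^p$.
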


\begin{proof}
Multiply~\eqref{eq:Boltzmann} by $pf^{p-1}$ and integrate over $v\in\rn{3}$. By Lemmas~\ref{lem:Q-I-J-p}, \ref{lem:J-p}, Theorem~\ref{thm:L1-prop} and \eqref{bound:Ip-weak-soft} in Lemma~\ref{lem: Ip-bound}, we find that
\begin{align} \label{bound:energy-Lp-weak-soft}
   \dv{t} \norm{f(t)}_{L^p}^p 
&= p \int_{\rn{3}} Q(f, f) f^{p-1} dv
\leq
   \frac{p}{p'}I_p(f, f)-\frac{p}{\max\{p,p'\}}J_p(f, f)  \nn
\\
& \leq
  -C_0 \norm{f^{\frac{p}{2}}}_{\whp{s}{\gam/2}}^2
  +C_1\norm{f^{\frac{p}{2}}}_{\wlp{2}{\gam/2}}^2
  + \frac{p}{p'} I_p(f, f)  \nn
\\
& \leq
  -C_0 \norm{f^{\frac{p}{2}}}_{\whp{s}{\gam/2}}^2
  +C_1\norm{f^{\frac{p}{2}}}_{\wlp{2}{\gam/2}}^2
  + C \norm{f}_{L^1_{|\gamma|}}
  \norm{f^{\frac{p}{2}}}_{H^{|\gamma|/2}_{\gamma/2}}^2  \nn
\\
& \leq
  -C_0 \norm{f^{\frac{p}{2}}}_{\whp{s}{\gam/2}}^2
  +C_1\norm{f^{\frac{p}{2}}}_{\wlp{2}{\gam/2}}^2
  + C (1+t)
  \norm{f^{\frac{p}{2}}}_{H^{|\gamma|/2}_{\gamma/2}}^2  \nn
\\
& \leq
  -\frac{C_0}{2} \norm{f^{\frac{p}{2}}}_{\whp{s}{\gam/2}}^2
  + C \norm{f}_{L^p}^p, 
\end{align}
where the last step follows from the interpolation $H^{|\gamma|/2}_{\gamma/2} \hookrightarrow H^s_{\gamma/2} \cap L^2_{\gamma/2}$ for $\gamma \in (-2s, 0)$.

If $f_0 \in L^p$ , then the bound in~\eqref{Lp-prop} follows from the Gronwall's inequality. Otherwise, let us proceed with the generation of the $L^p$-norm. If we take $\theta_3 \in (0,1)$ so that 
\begin{align} \label{def:theta-0}
  \frac{1}{p} = (1-\theta_3) + \frac{\theta_3}{p_s},
\end{align} 
then by taking $w$ large enough and using Theorem~\ref{thm:L1-prop} and~\eqref{Sobolev}, we get
\begin{align} \label{bound:L-p-H-s}
    \norm{f}_{\lp{p}}^p 
\leq \norm{f}_{L^1_w}^{p(1-\theta_3)}\norm{f}_{L^{p_s}_{\gam/2}}^{p\theta_3} 
 \leq 
    C (1+T)^{p(1-\theta_3)} \norm{f^{\frac{p}{2}}}_{H^s_{\gamma/2}} ^{2\theta_3}
\leq
  \epsilon \norm{f^{\frac{p}{2}}}_{H^s_{\gamma/2}} ^{2}
  + C_{\epsilon}, 
\end{align}
By~\eqref{bound:energy-Lp-weak-soft} and taking $\epsilon$ small enough, we have
\begin{align} \label{bound:L-p-generation-basic}
   \dv{t} \norm{f(t)}_{L^p}^p
+ \frac{C_0}{4} \norm{f^{\frac{p}{2}}}_{\whp{s}{\gam/2}}^2
\leq
  C.
\end{align}
The second inequality in~\eqref{bound:L-p-H-s} also gives
\begin{equation} \label{bound:lower-H-s} 
   \norm{f^{\frac{p}{2}}}_{\whp{s}{\gam/2}}^{2}
\geq
    \frac{\norm{f}_{L^p}^{\frac{p}{\theta_3}}}{C^{\frac{1}{\theta_3}} (1+T)^{p \frac{1-\theta_3}{\theta_3}}}. 
\end{equation}
Applying~\eqref{bound:lower-H-s} to~\eqref{bound:L-p-generation-basic}, we find that
\begin{equation*}
    \dv{t}\norm{f(t)}_{\lp{p}}^p + C_2 \frac{\norm{f(t)}_{\lp{p}}^{\frac{p}{\theta_3}}}{(1+T)^{p\frac{1-\theta_3}{\theta_3}}} 
\leq 
   C, \qquad \text{for all }  t \in (0,T].
\end{equation*}
That is, $\norm{f(t)}_{\lp{p}}^p$ is a sub-solution to the ordinary differential equation
\begin{equation} \label{ODE:generation-1}
    \dv{t}X(t) + C_2 \frac{X(t)^{\frac{1}{\theta_3}}}{(1+T)^{p\frac{1-\theta_3}{\theta_3}}} = C, \qquad \text{for all } t \in (0,T].
\end{equation}
Similar as in the proof of \cite[Theorem 1]{Alo19}, there exists a constant $C$ such that 
\begin{equation}
    X^*(t) 
\coloneqq 
   C \vpran{t^{-\frac{\theta_3}{1-\theta_3}}+1}
\end{equation}
is a super-solution to \eqref{ODE:generation-1}. Thus the parameter $\alpha_1$ in~\eqref{generation-1} satisfies $\alpha_1 = \frac{\theta_3}{1-\theta_3}$ with $\theta_3$ defined in~\eqref{def:theta-0}. 
Integrating~\eqref{bound:L-p-generation-basic} from $t$ to $T$ and applying the bound for $\norm{f}_{L^p}$ in~\eqref{generation-1}, we have
\begin{equation*}   
   \int_t^T \norm{f^{\frac{p}{2}}}_{\whp{s}{\gam/2}}^2(\tau) d\tau
\leq
   C T + \norm{f(t)}_{L^p}^p
\leq
  C \left(t^{-\alpha_1}+1\right),\quad \text{for all } t\in(0,T]. \qedhere
\end{equation*}
\end{proof}

\smallskip
\subsection{$\lp{p}$-Generation for $\gam\leq-2s$}
We now present our results for the propagation and generation of $\lp{p}$-norms when $\gam\in(-3,-2s]$.

\begin{theorem}\label{thm:L-p-strong-soft}
Let $\gam\in(-3,-2s]$, $D_0,E_0>0$, $B$ satisfy~\eqref{cond:B-1}-\eqref{cond:B-3} and $T>0$ be fixed. Let $f(t,v)$ be a sufficiently smooth solution to~\eqref{eq:Boltzmann} on $(0,T]\times\rn{3}$ with $f_0 \in \mathcal{U}(D_0,E_0)\cap\wlp{1}{w}$ for $w$ sufficiently large. Then,
\begin{enumerate}[i)]
\item (Generation) if $f_0\in\lp{p_0}$ for some $p_0 \in \left(\frac{3}{3+\gam+2s}, \frac{3}{3+\gam} \right)$, then for any $p \in \left(\frac{3}{3+\gam+2s}, \infty\right)$, there are constants $C, t_\ast$ depending on $\gam,s,D_0,E_0,p$ and $\norm{f_0}_{L^{p_0} \cap L^1_w}$ and $\alpha_2>0$ depending on $\gamma, s$ such that we have
\begin{equation}\label{generation:L-p-1}
   \norm{f(t)}_{\lp{p}} 
+ \int_{t}^{t_*}\norm{f^{\frac{p}{2}}(\tau)}_{\whp{s}{\gam/2}}^2 d\tau
\leq 
  C \left(t^{-\alpha_2}+1\right), \quad \text{for all }t\in(0,t_\ast];
\end{equation} 

\medskip

\item (Propagation)
for $p\in\left(\frac{3}{3+\gam+2s}, \infty\right)$, if $f_0\in\lp{p}$, then there are constants $C > 0$ and  $t_\ast > 0$ depending on $\gam,s,D_0,E_0,p, \norm{f_0}_{L^p}$ such that
\begin{equation}\label{bound:L-p-propag}
   \norm{f(t)}_{\lp{p}} 
\leq 
   C
\quad \text{for all }t\in(0,t_\ast].
\end{equation}

\end{enumerate}
\end{theorem}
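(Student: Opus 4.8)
The plan is to run an energy estimate for $\norm{f(t)}_{L^p}^p$ parallel to the proof of Theorem~\ref{thm:Lp-weak-soft}, but now using the strong-type bounds \eqref{bound:Ip-strong-1}--\eqref{bound:Ip-strong-2} on $I_p$ from Lemma~\ref{lem: Ip-bound} instead of \eqref{bound:Ip-weak-soft}, since for $\gam \leq -2s$ the interpolation $H^{|\gamma|/2}_{\gamma/2} \hookrightarrow H^s_{\gamma/2} \cap L^2_{\gamma/2}$ is no longer available. First I would multiply \eqref{eq:Boltzmann} by $pf^{p-1}$, integrate in $v$, and apply Lemmas~\ref{lem:Q-I-J-p} and~\ref{lem:J-p} to get
\begin{align*}
   \dv{t}\norm{f(t)}_{L^p}^p
\leq
   -C_0 \norm{f^{\frac{p}{2}}}_{\whp{s}{\gam/2}}^2
  + C_1 \norm{f^{\frac{p}{2}}}_{\wlp{2}{\gam/2}}^2
  + \frac{p}{p'} I_p(f,f).
\end{align*}
The $L^2_{\gamma/2}$ term is harmless: since $\gamma < 0$ one has $\norm{f^{p/2}}_{L^2_{\gamma/2}}^2 \leq \norm{f}_{L^p}^p$. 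The new feature is that $I_p(f,f)$ must be controlled with $g = f$, so I would invoke \eqref{bound:Ip-strong-1} with $p_0 = p$ when $p$ itself lies in $\left(\tfrac{3}{3+\gamma+2s}, \tfrac{3}{3+\gamma}\right)$, and otherwise bootstrap: once $f \in L^{p_0}$ for an admissible $p_0$ is known (which is part of the hypothesis in part~i) and follows from part~i) before proving propagation for general $p$), use \eqref{bound:Ip-strong-2}. Either way we land on a bound of the form $I_p(f,f) \leq C \norm{f}_{L^1_w}^{p\theta} \norm{f^{p/2}}_{H^s_{\gamma/2}}^{2(1-\theta)}$ with $\theta \in (0,1)$.

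Next I would absorb the $H^s$ factor. Since $2(1-\theta) < 2$, Young's inequality gives $C \norm{f}_{L^1_w}^{p\theta}\norm{f^{p/2}}_{H^s_{\gamma/2}}^{2(1-\theta)} \leq \tfrac{C_0}{4}\norm{f^{p/2}}_{H^s_{\gamma/2}}^2 + C \norm{f}_{L^1_w}^{p\theta/\theta} = \tfrac{C_0}{4}\norm{f^{p/2}}_{H^s_{\gamma/2}}^2 + C\norm{f}_{L^1_w}^{p}$, and by Theorem~\ref{thm:L1-prop} (valid for $\gamma \in (-3,1)$ and $w > 2$) we have $\norm{f(t)}_{L^1_w} \leq C_w(1+t) \leq C_w(1+t_\ast)$ on the finite interval. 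Noting $\norm{f^{p/2}}_{\whp{s}{\gam/2}}^2 \geq \norm{f^{p/2}}_{H^s_{\gamma/2}}^2$ and combining, we reach the differential inequality
\begin{align*}
  \dv{t}\norm{f(t)}_{L^p}^p + \frac{C_0}{2}\norm{f^{\frac{p}{2}}}_{\whp{s}{\gam/2}}^2
\leq
  C(1+t_\ast),
\end{align*}
which is the exact analogue of \eqref{bound:L-p-generation-basic}. From here the argument is identical to Theorem~\ref{thm:Lp-weak-soft}: for propagation (part~ii)), if $f_0 \in L^p$ then Gronwall (or just dropping the good term and integrating) yields \eqref{bound:L-p-propag} on a short interval $(0,t_\ast]$; for generation (part~i)), I would interpolate $\norm{f}_{L^p}^p \leq \norm{f}_{L^1_w}^{p(1-\theta_3)}\norm{f}_{L^{p_s}_{\gamma/2}}^{p\theta_3} \leq C\norm{f^{p/2}}_{H^s_{\gamma/2}}^{2\theta_3}(1+t_\ast)^{p(1-\theta_3)}$ via \eqref{Sobolev}, which is legitimate precisely because $1 < p < p_s = \tfrac{3p}{3-2s}$, then use this both to produce a lower bound $\norm{f^{p/2}}_{\whp{s}{\gam/2}}^2 \geq c\,\norm{f}_{L^p}^{p/\theta_3}$ and to close a nonlinear ODE $\dot X + C_2 X^{1/\theta_3} \leq C$ whose super-solution $C(t^{-\theta_3/(1-\theta_3)} + 1)$ gives \eqref{generation:L-p-1} with $\alpha_2 = \theta_3/(1-\theta_3)$; integrating the differential inequality from $t$ to $t_\ast$ controls the dissipation integral.

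The one genuinely new point compared to Theorem~\ref{thm:Lp-weak-soft}—and the main obstacle—is the restriction to a \emph{short} time interval $(0,t_\ast]$ rather than all of $(0,T]$. This is forced by the bootstrap structure: in part~i) one first establishes \eqref{generation:L-p-1} for exponents $p$ in the HLS-admissible window $\left(\tfrac{3}{3+\gamma+2s}, \tfrac{3}{3+\gamma}\right)$ using \eqref{bound:Ip-strong-1} with $g=f$, and only then, knowing $f(t) \in L^{p_0}$ for some such $p_0$ and all $t \in (0,t_\ast]$, can one upgrade to arbitrary $p > \tfrac{3}{3+\gamma+2s}$ via \eqref{bound:Ip-strong-2}; the time $t_\ast$ from the first stage caps the second. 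One must be careful that $t_\ast$ depends only on the claimed quantities ($\gamma,s,D_0,E_0,p$, and $\norm{f_0}_{L^{p_0}\cap L^1_w}$ or $\norm{f_0}_{L^p}$), which it does since all constants above are explicit in those data and the finite-time bound $C_w(1+t_\ast)$ only needs to be, say, self-consistently chosen so that $t_\ast \leq 1$. The remaining steps—Young's inequality, the Sobolev interpolation, and the ODE comparison—are routine and copied essentially verbatim from the proofs already given.
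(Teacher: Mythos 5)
Your outline correctly identifies the overall skeleton (energy estimate, use of the strong $I_p$ bounds, Young's inequality, the two-stage bootstrap, interpolation plus ODE comparison), but there is a genuine gap at the heart of the argument: you have dropped the $\norm{g}_{\lp{p_0}}$ factor (equivalently $\norm{f}_{\lp{p_0}}$ once $g=f$) from the $I_p$ bounds in Lemma~\ref{lem: Ip-bound}. Both \eqref{bound:Ip-strong-1} and \eqref{bound:Ip-strong-2} give a bound of the form
$I_p(f,f)\leq C\norm{f}_{\lp{p_0}}\norm{f}_{L^1_w}^{p\theta}\norm{f^{p/2}}_{H^s_{\gamma/2}}^{2(1-\theta)}$,
and when you write ``Either way we land on a bound of the form $I_p(f,f)\leq C\norm{f}_{L^1_w}^{p\theta}\norm{f^{p/2}}_{H^s_{\gamma/2}}^{2(1-\theta)}$'' you have silently absorbed $\norm{f}_{\lp{p_0}}$ into the constant. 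That is legitimate only \emph{after} the $L^{p_0}$ bound is established; for the base step (taking $p=p_0$ and $g=f$), $\norm{f}_{\lp{p_0}}$ is exactly the unknown you are estimating, and it cannot be treated as a constant.

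This is not a cosmetic omission: the lost factor is what produces the superlinear nonlinearity that forces the theorem to be stated on a short interval $(0,t_\ast]$. After Young, the paper obtains
$\dv{t}\norm{f}_{\lp{p_0}}^{p_0}+\tfrac{C_0}{2}\norm{f^{p_0/2}}_{H^s_{\gamma/2}}^2\leq C\bigl(\norm{f}_{\lp{p_0}}^{p_0}+\norm{f}_{\lp{p_0}}^{1/\theta_4}\bigr)$,
and the explicit computation shows $1/\theta_4>p_0$, so this is a superlinear ODE in $X=\norm{f}_{\lp{p_0}}^{p_0}$ that only gives a local-in-time bound. In contrast, your version of the inequality has a time-bounded right-hand side $C(1+t_\ast)$ and would integrate to a global bound on all of $(0,T]$, leaving no mechanism for $t_\ast$ to appear at all; you then invoke $t_\ast$ as if it were externally supplied, which is circular. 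You should retain the $\norm{f}_{\lp{p_0}}$ factor, derive the superlinear ODE in the base step with $p=p_0\in\bigl(\tfrac{3}{3+\gamma+2s},\tfrac{3}{3+\gamma}\bigr)$, obtain $t_\ast$ from its local-in-time comparison principle, and only then pass to arbitrary $p$ on $(0,t_\ast]$ using the now-known $L^{p_0}$ bound; the remainder (interpolation, $\norm{f}_{\lp{p}}\lesssim t^{-\alpha_2}+1$ via the ODE comparison) goes through as you describe.
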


\begin{proof}
{\it i)} Similar as in the proof of Theorem~\ref{thm:Lp-weak-soft}, we have the a priori energy estimate
\begin{align*}
   \dv{t} \norm{f(t)}_{L^p}^p 
&= p \int_{\rn{3}} Q(f, f) f^{p-1} dv
\leq
   \frac{p}{p'}I_p(f, f)-\frac{p}{\max\{p,p'\}}J_p(f, f) 
\\
& \leq
  \frac{p}{p'} I_p(f, f)
  -C_0 \norm{f^{\frac{p}{2}}}_{\whp{s}{\gam/2}}^2
  +C_1 \norm{f}_{L^p}^p.
\end{align*}
By \eqref{bound:Ip-strong-2} in Lemma~\ref{lem: Ip-bound}, Theorem~\ref{thm:L1-prop} and Young's inequality, taking $p = p_0$ we have
\begin{align*}
  I_{p_0}(f, f)
\leq
  C \norm{f}_{\lp{p_0}}
  \norm{f}_{L^1_w}^{p_0 \theta_4} \norm{f^{\frac{p_0}{2}}}_{H^s_{\gamma/2}}^{2(1-\theta_4)}
\leq
  C_{\epsilon} \, \norm{f}_{L^{p_0}}^{\frac{1}{\theta_4}}
  + \epsilon \norm{f^{\frac{p_0}{2}}}_{H^s_{\gamma/2}}^{2}, 
\end{align*}
where
\begin{align} \label{def:theta-2-q-0}
  \frac{1}{p_0 q_0} = \theta_4 + \frac{(1 - \theta_4)} {p_s}, 
\qquad
  \frac{1}{q_0} = 2+\frac{\gam}{3}-\frac{1}{p_0}, 
\qquad
  p_s = \frac{3 p_0}{3 - 2s}.
\end{align}
Note that $\theta_4$ satisfies
\begin{align*}
  \frac{1}{\theta_4}
= \frac{1 - \frac{1}{p_s}}{\frac{1}{p q_0} - \frac{1}{p_s}}
= p \frac{1 - \frac{1}{p_s}}{\frac{1}{q_0} - \frac{p}{p_s}}
> p \frac{1 - \frac{1}{p_s}}{1 - \frac{p}{p_s}}
> p. 
\end{align*}
Therefore, we only expect a short-time bound via this estimate. Inserting the bound of $I_{p_0}$ into the energy estimate, we have
\begin{align} \label{ineq:L-p-0-norm}
  \dv{t} \norm{f(t)}_{L^{p_0}}^{p_0} 
+ \frac{C_0}{2} \norm{f^{\frac{p_0}{2}}}_{\whp{s}{\gam/2}}^2
\leq
  C \vpran{\norm{f}_{L^{p_0}}^{p_0} + \norm{f}_{L^{p_0}}^{\frac{1}{\theta_4}}}.
\end{align}
By Gronwall's inequality, for $f_0 \in L^{p_0}$, there exists a short time $t_\ast$ such that 
\begin{align} \label{bound:L-p-0-short}
   \norm{f(t)}_{L^{p_0}} \leq C, 
\qquad
   t \in (0, t_\ast],
\end{align}
where $C$ depends on $\gam,s,D_0,E_0,p_0$ and $\norm{f_0}_{L^{p_0} \cap L^1_w}$. Given such $L^{p_0}$ for $f$, we can now generate any $L^p$-bound for $f$ on $(0,, t_\ast]$. By \eqref{bound:Ip-strong-2} in Lemma~\ref{lem: Ip-bound} again, we have
\begin{align*}
   I_{p}(f, f)
\leq
  C \norm{f}_{\lp{p_0}}
  \norm{f}_{L^1_w}^{p \theta_5} \norm{f^{\frac{p}{2}}}_{H^s_{\gamma/2}}^{2(1-\theta_5)}
\leq
  C_{\epsilon} + \epsilon \norm{f^{\frac{p}{2}}}_{\whp{s}{\gam/2}}^2,
\qquad
  \frac{1}{p q_0} = \theta_5 + \frac{1 - \theta_5}{p_s},
\end{align*}
where the last inequality makes use of the bound of $\norm{f}_{L^1_w}$ and $\norm{f}_{L^{p_0}}$. Therefore, the energy estimate reads
\begin{align} \label{energy:L-p-strong}
   \dv{t} \norm{f(t)}_{L^{p}}^{p} 
+ \frac{C_0}{2} \norm{f^{\frac{p}{2}}}_{\whp{s}{\gam/2}}^2
\leq
  C 
  \vpran{\norm{f}_{L^{p}}^{p} + 1},
\qquad
  t \in (0, t_\ast].
\end{align}
The generation of the $L^{p}$-norm for all $p \in \vpran{\frac{3}{3+\gam+2s},\infty}$ follows from the same interpolation as in~\eqref{bound:lower-H-s}, which gives
\begin{align} \label{bound:L-p-generation-strong}
   \dv{t} \norm{f(t)}_{L^p}^p
+ \frac{C_0}{4} \norm{f^{\frac{p}{2}}}_{\whp{s}{\gam/2}}^2
\leq
  C
\end{align}
and
\begin{equation*}
    \dv{t}\norm{f(t)}_{\lp{p}}^p + C_3 \frac{\norm{f(t)}_{\lp{p}}^{\frac{p}{\theta_3}}}{(1+T)^{p\frac{1-\theta_3}{\theta_3}}} 
\leq 
   C, \qquad \text{for all }  t \in (0,t_\ast].
\end{equation*}

\Ni Lastly, the $H^s_{\gamma/2}$-bound results from integrating~\eqref{bound:L-p-generation-strong} together with the bound of $\norm{f}_{L^p}$. 
\medskip

\Ni {\it ii)} Since $f_0 \in L^1 \cap L^p$, it also satisfies that $f_0 \in L^{p_0}$ for some $p_0 \in \left(\frac{3}{3+\gam+2s}, \frac{3}{3+\gam} \right)$. Then the propagation of the $L^{p_0}$ together with~\eqref{energy:L-p-strong} and the Gronwall inequality gives the propagation of the $L^p$-norm. 
\end{proof}

\section{$\lp{\infty}$ Theory}\label{sec;Linfty theory}

In this section, we use the De Giorgi level-set method to derive the $L^\infty$-bounds for the solutions to~\eqref{eq:Boltzmann}. Let $\chi_E$ be the characteristic function of $E \subseteq \RR^3$. Fix $K > 0$.  Denote 
\begin{align}\label{def:f-k}
    K_k\coloneqq K(1-2^{-k}),
\qquad
    f_k \coloneqq \left(f-K_k\right)\one{f\geq K_k},
\qquad
   k \in \mathbb{N}. 
\end{align}
For $t_\ast$ in the generation and propagation of $L^p$-norms in Theorem~\ref{thm:L-p-strong-soft}, let
\begin{align} \label{def:t-k}
  t_k := \frac{1}{2} t_\ast (1 - 2^{-(k+1)}) \in [t_\ast/8,  \, \, t_\ast], 
\qquad
  k \geq 0. 
\end{align}

The strategy will be to derive an energy estimate for $f_k$ via the weak formulation (\ref{eqn:weak-form}) with $\varphi = f_k^{p-1}$. The first step in this direction is given by the following lemma:
\begin{lemma} \cite[p. 9]{Alo19} \label{lem:Q-f-k}
Let $f \geq 0$ be sufficiently smooth and $f_k$ be as in~\eqref{def:f-k} for a fixed $K\geq0$. Then,
\begin{align} \label{est:Q-L-infty}
    \int_{\rn{3}} Q(f,f) f_k^{p-1}dv 
\leq 
   K I_{p-1}(f,f_k) + \frac{1}{p'} I_p(f,f_k) - \frac{1}{\max\{p,p'\}} J_p(f,f_k),
\end{align}
where $I_p(f, f_k)$ and $J_p(f, f_k)$ are given by \eqref{def:I-p}-\eqref{def:J-p}.
\end{lemma}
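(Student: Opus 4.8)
The plan is to start from the weak formulation~\eqref{eqn:weak-form} applied with the test function $\varphi = f_k^{p-1}$, which gives
\begin{align*}
  \int_{\rn{3}} Q(f,f) f_k^{p-1} dv
= \int_{\rn{3}\times\rn{3}}\int_{\sn{2}} \bigl((f_k^{p-1})' - f_k^{p-1}\bigr) f_* f \abs{v-v_*}^\gam b\,d\sigma\,dv_*\,dv,
\end{align*}
and then to rewrite the first argument of the collision integral $f$ in terms of $f_k$. The key algebraic observation is the pointwise identity $f = f_k + K_k \one{f \geq K_k}$ on the set $\{f \geq K_k\}$, while on $\{f < K_k\}$ one has $f_k = 0$ so that $(f_k^{p-1})' - f_k^{p-1}$ involves only the values at $v'$. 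The cleanest route is to split $f = f_k + (f - f_k)$ everywhere and note that $f - f_k = \min\{f, K_k\} \leq K_k \leq K$ pointwise; substituting this into the $f$-slot of the integrand produces two pieces, one with $f$ replaced by $f_k$ and one with $f$ replaced by the bounded quantity $f - f_k$.

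For the first piece — the integral with integrand $\bigl((f_k^{p-1})' - f_k^{p-1}\bigr) f_* f_k \abs{v-v_*}^\gam b$ — I would apply exactly the same manipulation that underlies Lemma~\ref{lem:Q-I-J-p}: namely the elementary inequality bounding $(b^{p-1} - a^{p-1})b$ (or rather the symmetrized $\sigma$-integral) by $\tfrac{1}{p'}(b^p - a^p) - \tfrac{1}{\max\{p,p'\}}(b^{p/2} - a^{p/2})^2$, applied with $a = f_k'$, $b = f_k$. This is precisely the pointwise convexity estimate from \cite{Alo19} underlying~\eqref{bound:Q-g-f-general}, and it does not matter that the outer weight is $f_*$ rather than $(f_k)_*$ since $f \geq 0$ and that weight is simply carried along. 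Integrating, this piece contributes $\tfrac{1}{p'} I_p(f, f_k) - \tfrac{1}{\max\{p,p'\}} J_p(f, f_k)$, matching the last two terms of~\eqref{est:Q-L-infty}.

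For the second piece — integrand $\bigl((f_k^{p-1})' - f_k^{p-1}\bigr) f_* (f - f_k) \abs{v-v_*}^\gam b$ — I would use $0 \leq f - f_k \leq K$ to bound it above (after checking the sign works out, i.e.\ that this term is being added) by $K$ times the integral of $\bigl((f_k^{p-1})' - f_k^{p-1}\bigr) f_* \abs{v-v_*}^\gam b$, which is exactly $K I_{p-1}(f, f_k)$ by the definition~\eqref{def:I-p} with exponent $p-1$. The one subtlety requiring care is the sign bookkeeping: one must verify that after the symmetrization/change of variables, the contribution of the $(f-f_k)$ term enters with a favorable sign so that replacing $f - f_k$ by its upper bound $K$ gives an upper bound for the whole expression; this is the step where I expect to have to be careful, and it is the reason the factor is $+K I_{p-1}$ rather than something with a definite-sign gain. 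Assembling the two pieces yields~\eqref{est:Q-L-infty}. The main obstacle is thus not any deep estimate but the correct pointwise convexity inequality for the truncated function together with the sign analysis of the lower-order $K$-term; everything else is bookkeeping already carried out in the proof of Lemma~\ref{lem:Q-I-J-p}.
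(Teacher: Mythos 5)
The paper does not prove this lemma---it simply cites \cite[p.~9]{Alo19}---so there is no in-paper argument to compare against; I am evaluating your proposal on its own terms. Your overall decomposition is the right one: splitting $f = f_k + (f-f_k)$ in the second slot of the weak formulation~\eqref{eqn:weak-form} with $\varphi = f_k^{p-1}$, and then applying Lemma~\ref{lem:Q-I-J-p} (with $g=f$ and the role of ``$f$'' played by $f_k$) to the $f_k$-piece, correctly produces the last two terms of~\eqref{est:Q-L-infty}. The gap is in your treatment of the $(f - f_k)$-piece, at exactly the step you flag (``this is the step where I expect to have to be careful'') and then do not carry out. Replacing $f - f_k$ by its pointwise upper bound $K$ inside $\bigl((f_k^{p-1})' - f_k^{p-1}\bigr)\,f_*\,(f-f_k)$ does \emph{not} give an upper bound: the bracket $(f_k^{p-1})' - f_k^{p-1}$ has no definite sign, and on the set where $f_k(v) > f_k(v')$ it is strictly negative, so enlarging the nonnegative factor $(f-f_k)$ there makes the integrand more negative, not less. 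Your hope that ``the sign works out after the symmetrization/change of variables'' does not pan out; no symmetrization changes the sign profile of this bracket.

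The fix uses precisely the observation you record at the start and then discard as unnecessary: on $\{f \geq K_k\}$ one has $f - f_k \equiv K_k$ \emph{exactly}, not merely $\leq K_k$. Write the second piece as gain minus loss,
\begin{equation*}
\int_{\rn{3}\times\rn{3}}\int_{\sn{2}} (f_k^{p-1})'\, f_*\,(f-f_k)\, B\, d\sigma\, dv_*\, dv \;-\; \int_{\rn{3}\times\rn{3}}\int_{\sn{2}} f_k^{p-1}\, f_*\,(f-f_k)\, B\, d\sigma\, dv_*\, dv .
\end{equation*}
In the gain integral every factor is nonnegative, so $f - f_k \leq K_k$ gives a legitimate upper bound $K_k\int (f_k^{p-1})' f_* B$. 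In the loss integral the integrand vanishes unless $f_k(v) > 0$, i.e.\ $f(v) > K_k$, and there $(f-f_k)(v) = K_k$ with \emph{equality}, so the factor $K_k$ pulls out of the loss term exactly. Combining gives a bound by $K_k\, I_{p-1}(f,f_k)$, and hence by $K\, I_{p-1}(f,f_k)$ as used in the sequel (where $I_{p-1}$ is immediately bounded above by the nonnegative quantity of Corollary~\ref{cor:I-p}, so replacing $K_k$ by $K$ costs nothing). This split-by-gain-and-loss sign analysis is the substance of the lemma; without it the passage from $(f-f_k)$ to $K$ is unjustified, and that is the missing step in your write-up.
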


We recall in the next lemma the classical useful tool to break homogeneity in the interpolation. 

\begin{lemma}\label{lem:inhomog}
For any $\alpha\geq 0$ and $\beta \in [1, k]$,  one has
\begin{equation*}
   \one{f\geq K_k} 
\leq 
  \vpran{\frac{1}{2^\beta -1}}^\alpha 
  \vpran{\frac{2^k}{K}}^\alpha f_{k-\beta}^\alpha.
\end{equation*}
\end{lemma}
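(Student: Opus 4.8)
The plan is to prove Lemma~\ref{lem:inhomog} by unwinding the definitions in~\eqref{def:f-k} and exploiting the nested structure of the level sets. First I would fix a velocity $v$ and observe that if $f(v) < K_k$ then the left-hand side $\one{f\geq K_k}$ vanishes, so the inequality holds trivially; hence it suffices to treat the case $f(v)\geq K_k$. In that case I want to show $f_{k-\beta}(v)$ is bounded below by a positive multiple of $2^{-k}K$, so that raising to the power $\alpha\geq 0$ and multiplying by the stated constant recovers $1\geq\one{f\geq K_k}$.

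The key computation is the following: since $\beta\le k$, the index $k-\beta$ is a nonnegative integer, so $f_{k-\beta}=(f-K_{k-\beta})\one{f\geq K_{k-\beta}}$ makes sense. Because $K_j=K(1-2^{-j})$ is increasing in $j$, the hypothesis $f\geq K_k$ gives $f\geq K_k\geq K_{k-\beta}$, so the indicator $\one{f\geq K_{k-\beta}}$ equals $1$ and $f_{k-\beta}=f-K_{k-\beta}\geq K_k - K_{k-\beta}$. Now
\[
  K_k - K_{k-\beta}
= K\vpran{2^{-(k-\beta)} - 2^{-k}}
= \frac{K}{2^k}\vpran{2^\beta - 1},
\]
so $f_{k-\beta}\geq \frac{K}{2^k}(2^\beta-1)$, i.e. $\vpran{\frac{2^k}{K}}\frac{1}{2^\beta-1}\,f_{k-\beta}\geq 1$. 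Raising both sides to the power $\alpha\geq0$ preserves the inequality (both sides are nonnegative, and $2^\beta-1\ge 1$ since $\beta\ge 1$ guarantees the left factor is well-defined and the base is $\geq 1$), which yields
\[
  \vpran{\frac{1}{2^\beta-1}}^\alpha\vpran{\frac{2^k}{K}}^\alpha f_{k-\beta}^\alpha \geq 1 = \one{f\geq K_k},
\]
completing the argument in this case.

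There is essentially no serious obstacle here; the only points requiring a little care are the degenerate cases, which I would mention explicitly: when $\alpha=0$ both sides equal $1$ (and the convention $0^0=1$ is harmless since the right factor $f_{k-\beta}^0$ appears); when $K=0$ the set $\{f\geq K_k\}=\{f\geq 0\}$ is everything but then $K_k-K_{k-\beta}=0$ as well, so one should either exclude $K=0$ or note the inequality is vacuous/trivially degenerate there — in the intended application $K>0$. I would also note that the bound $\beta\ge 1$ is what ensures $2^\beta-1\geq 1>0$ so that division is legitimate, and $\beta\le k$ is what keeps $k-\beta\ge 0$ so $f_{k-\beta}$ is among the defined quantities. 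With these remarks the proof is just the displayed two-line estimate.
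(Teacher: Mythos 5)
Your proof is correct, and it is exactly the standard argument that the paper implicitly relies on (the paper states this as a ``classical useful tool'' without proof). The key identity $K_k-K_{k-\beta}=\frac{K}{2^k}(2^\beta-1)$ and the observation that on $\{f\geq K_k\}$ one has $f_{k-\beta}=f-K_{k-\beta}\geq K_k-K_{k-\beta}$ are precisely what is needed; raising to the power $\alpha\geq 0$ then gives the claim, and the case $f<K_k$ is trivial since the indicator vanishes. One minor remark: the parenthetical that $2^\beta-1\geq 1$ ``guarantees the base is $\geq 1$'' is not actually required for the monotonicity step (raising a nonnegative quantity that is $\geq 1$ to a power $\alpha>0$ keeps it $\geq 1$, regardless of whether the individual factors exceed $1$), but including it does no harm.
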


The $L^\infty$-bounds will be derived through the following decay of an energy functional $W_k$, which will be defined later. The bound in part (a) of Lemma~\ref{lem:W-k} has appeared in earlier works and part (b) is a generalization. For the convenience of the reader, we provide both proofs here. 

\begin{lemma}\label{lem:W-k}
(a) Let $a, b, C > 0$, $c > 1$, $k \in \mathbb{N}$ and $W_0 > 0$ be a constant. If 
\begin{equation}\label{eq:W-k}
   W_k 
\leq C 2^{ak} K^{-b} W_{k-1}^{c}
\quad \text{and} \quad
   K 
\geq 
   \vpran{C 2^{\frac{ac}{c-1}} W_0^{c-1}}^{\frac{1}{b}},
\end{equation}
then we have
\begin{equation*}
   W_k 
\leq 
   W_0\left(2^{-\frac{a}{c-1}}\right)^k,
\qquad
  k \geq 1. 
\end{equation*}

\smallskip
\Ni (b) More generally, if 
\begin{align*}
      W_k 
\leq 
   C 2^{ak} K^{-b} \vpran{W_{k-1}^{c_1} + W_{k-1}^{c_2}}, 
\qquad
   1 < c_1 \leq c_2
\end{align*}
with $K$ satisfying
\begin{align} \label{cond:K-general}
   K
\geq 
  \vpran{\max \left\{C 2^{c_2 + \frac{a c_1}{c_1-1}} W_0^{c_1+c_2-2}, \,\,
  C 2^{1+ \frac{a c_1}{c_1-1}} W_0^{c_1-1} \right\}}^{1/b}, 
\end{align}
then 
\begin{align} \label{bound:W-k-general}
   W_k 
\leq 
  W_0\left(2^{-\frac{a}{c_1-1}}\right)^k, 
\qquad
  k \geq 1. 
\end{align}
\end{lemma}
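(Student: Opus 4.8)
The plan is to prove both parts by induction on $k$, with part (b) containing part (a) as the special case $c_1 = c_2 = c$; I will write out part (a) first since the mechanism is cleaner there. For part (a), the base case $k=1$ follows from the recursion $W_1 \leq C 2^{a} K^{-b} W_0^c$ and the lower bound on $K$: since $K^b \geq C 2^{ac/(c-1)} W_0^{c-1}$, we get $C K^{-b} W_0^{c-1} \leq 2^{-ac/(c-1)}$, hence $W_1 \leq 2^{a} \cdot 2^{-ac/(c-1)} W_0 = 2^{-a/(c-1)} W_0$, which is exactly the claimed bound with $k=1$. For the inductive step, assume $W_{k-1} \leq W_0 (2^{-a/(c-1)})^{k-1}$. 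Plugging into the recursion,
\begin{align*}
  W_k
\leq
  C 2^{ak} K^{-b} W_{k-1}^c
\leq
  C 2^{ak} K^{-b} W_0^c \, 2^{-\frac{ac}{c-1}(k-1)}.
\end{align*}
Using $C K^{-b} W_0^{c-1} \leq 2^{-ac/(c-1)}$ again, this is bounded by $W_0 \, 2^{ak - \frac{ac}{c-1}(k-1) - \frac{ac}{c-1}} = W_0 \, 2^{ak - \frac{ac}{c-1} k}$, and since $ak - \frac{ac}{c-1}k = ak \cdot \frac{(c-1) - c}{c-1} = -\frac{a}{c-1}k$, we obtain $W_k \leq W_0 (2^{-a/(c-1)})^k$, closing the induction.

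For part (b), the strategy is identical but one must control the two exponents $c_1 \leq c_2$ simultaneously; the key observation is that the induction hypothesis $W_{k-1} \leq W_0 (2^{-a/(c_1-1)})^{k-1} \leq W_0$ (the inequality holding because $2^{-a/(c_1-1)} \leq 1$) lets one absorb the higher power: $W_{k-1}^{c_2} = W_{k-1}^{c_1} \cdot W_{k-1}^{c_2 - c_1} \leq W_{k-1}^{c_1} W_0^{c_2 - c_1}$. Hence
\begin{align*}
  W_{k-1}^{c_1} + W_{k-1}^{c_2}
\leq
  (1 + W_0^{c_2 - c_1}) W_{k-1}^{c_1}.
\end{align*}
The two terms in the $\max$ defining the threshold \eqref{cond:K-general} are exactly designed so that $C K^{-b} \cdot W_0^{c_2-1} \leq 2^{-(c_2 + \frac{ac_1}{c_1-1})}$ and $C K^{-b} \cdot W_0^{c_1-1} \leq 2^{-(1 + \frac{ac_1}{c_1-1})}$; adding these (after writing $W_0^{c_1+c_2-2} = W_0^{c_2-1} W_0^{c_1-1}$ appropriately, or rather treating $(1+W_0^{c_2-c_1}) W_0^{c_1-1} = W_0^{c_1-1} + W_0^{c_2-1}$) gives $C K^{-b}(1 + W_0^{c_2-c_1}) W_0^{c_1-1} \leq 2^{-\frac{ac_1}{c_1-1}}(2^{-c_2} + 2^{-1}) \leq 2^{-\frac{ac_1}{c_1-1}}$ since $2^{-c_2} + 2^{-1} \leq 2^{-1} + 2^{-1} = 1$ when $c_2 \geq 1$. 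With this single inequality $C K^{-b}(1+W_0^{c_2-c_1})W_0^{c_1-1} \leq 2^{-ac_1/(c_1-1)}$ in hand, the induction runs verbatim as in part (a): base case $k=1$ is immediate, and the step computes $W_k \leq C 2^{ak} K^{-b}(1+W_0^{c_2-c_1}) W_0^{c_1} 2^{-\frac{ac_1}{c_1-1}(k-1)} \leq W_0 \, 2^{ak - \frac{ac_1}{c_1-1}k} = W_0(2^{-a/(c_1-1)})^k$.

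The only genuinely delicate point — and the step I would double-check most carefully — is the bookkeeping that shows the explicit threshold \eqref{cond:K-general} is strong enough to yield the clean bound $C K^{-b}(1+W_0^{c_2-c_1})W_0^{c_1-1} \leq 2^{-ac_1/(c_1-1)}$; in particular one must verify the two cases of whether $W_0 \geq 1$ or $W_0 < 1$ are both covered (when $W_0 < 1$ the dominant term among $W_0^{c_1-1}, W_0^{c_2-1}$ flips), and the use of $c_2 \geq 1$ to bound $2^{-c_2} + 2^{-1} \leq 1$. Everything else is elementary algebra of exponents, so I do not anticipate any conceptual obstacle; the proof is purely a deterministic iteration argument of De Giorgi type and requires no compactness, no PDE structure, just the stated recursion and smallness of $K^{-b}$.
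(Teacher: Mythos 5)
Your proof of part (a) is correct and amounts to the same super-solution argument the paper uses, just unwound as an explicit induction. For part (b), you take a genuinely different route: the paper first proves by induction that $W_k\leq 1$ for all $k$ (treating $W_0\leq 1$ and $W_0>1$ separately, with a rescaling $\widehat W_k = W_k/(2W_0)$ in the second case), which linearizes the recursion to a single power $c_1$ and lets part (a) be invoked as a black box; you instead run a direct single induction on the target bound $W_k\leq W_0(2^{-a/(c_1-1)})^k$, observing that the induction hypothesis already gives $W_{k-1}\leq W_0$, so that $W_{k-1}^{c_1}+W_{k-1}^{c_2}\leq(1+W_0^{c_2-c_1})W_{k-1}^{c_1}$. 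That is a clean and slightly more economical argument: it avoids both the bootstrap into part (a) and the rescaling trick.

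One bookkeeping point is stated imprecisely in the middle of your argument, though you correctly flag it yourself at the end. The threshold on $K$ gives $CK^{-b}W_0^{c_1+c_2-2}\leq 2^{-(c_2+\frac{ac_1}{c_1-1})}$ and $CK^{-b}W_0^{c_1-1}\leq 2^{-(1+\frac{ac_1}{c_1-1})}$; the inequality you quote, $CK^{-b}W_0^{c_2-1}\leq 2^{-(c_2+\frac{ac_1}{c_1-1})}$, only follows from the first of these after multiplying by $W_0^{1-c_1}\leq 1$, i.e.\ when $W_0\geq 1$. When $W_0<1$ that step fails, but then $W_0^{c_2-1}\leq W_0^{c_1-1}$ and the second threshold inequality alone yields $CK^{-b}(W_0^{c_1-1}+W_0^{c_2-1})\leq 2\cdot 2^{-(1+\frac{ac_1}{c_1-1})} = 2^{-\frac{ac_1}{c_1-1}}$. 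So the single estimate $CK^{-b}(1+W_0^{c_2-c_1})W_0^{c_1-1}\leq 2^{-\frac{ac_1}{c_1-1}}$ does hold in both regimes, and the rest of your induction then closes exactly as written. You should spell out this two-case verification rather than leaving it as a remark, since as currently phrased the displayed inequality is not a direct consequence of~\eqref{cond:K-general}.
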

\begin{proof}
(a) Denote $\widetilde W_k = W_0\left(2^{-\frac{a}{c-1}}\right)^k$. The proof follows from showing that $\tilde W_k$ is a super-solution of~\eqref{eq:W-k} if $K$ satisfies the condition in ~\eqref{eq:W-k}. Indeed, 
\begin{align*}
 C 2^{ak} K^{-b} \widetilde W_{k-1}^c 
= C2^{ak}K^{-b}W_0^c\left(2^{-\frac{a}{c-1}}\right)^{c(k-1)} 
\leq \widetilde W_k, 
\qquad
  \widetilde W_0 = W_0.
\end{align*}
Hence $W_k \leq \widetilde W_k = W_0\left(2^{-\frac{a}{c-1}}\right)^k$. 

\smallskip
\Ni (b) 
If $W_0 \leq 1$, then we show by induction that 
\begin{align*}
   W_k 
\leq 1, 
\qquad \text{for all} \quad k \geq 1.
\end{align*}
First, for $k=1$, 
\begin{align*}
  W_1 
\leq 
  C 2^a K^{-b} (W_0^{c_1} + W_0^{c_2})
\leq
  C 2^{1+a} K^{-b} W_0^{c_1-1}
\leq 1,
\end{align*}
where we have used $W_0 \leq 1$ and $1 < c_1 \leq c_2$. At the induction step, suppose $W_j \leq 1$ for all $0 \leq j \leq k$. Then for any $0 \leq j \leq k$, 
\begin{align*}
  W_{j+1}
\leq
  C 2^{ak} K^{-b} \vpran{W_{j}^{c_1} + W_{j}^{c_2}}
\leq
  2 C 2^{ak} K^{-b} W_{j}^{c_1}, 
\qquad
  0 \leq j \leq k.  
\end{align*}
Therefore by the range of $K$, part (a) applies and we obtain that 
\begin{align*}
  W_{k+1} 
\leq 
  W_0 \vpran{2^{-\frac{a}{c_1 - 1}}}^{k+1} 
\leq 1. 
\end{align*}
If $W_0 > 1$, then let $\widehat W_k = \frac{W_k}{2 W_0}$. Such $\widehat W_k$ satisfies
\begin{align*}
   \widehat W_k 
\leq
  C (2W_0)^{c_2 - 1}2^{ak} K^{-b} \vpran{\widehat W_{k-1}^{c_1} + \widehat W_{k-1}^{c_2}}, 
\qquad
  \widehat W_0 = 1/2 < 1. 
\end{align*}
Therefore, by the previous case when $W_0 \leq 1$, we have
\begin{equation*} 
   \widehat W_k
\leq
  \widehat W_0 \vpran{2^{-\frac{a}{c_1 - 1}}}^k
\Longrightarrow
  W_k 
\leq 
  W_0 \vpran{2^{-\frac{a}{c_1 - 1}}}^k, 
\qquad
  k \geq 1.  \qedhere
\end{equation*}
\end{proof}

\subsection{$L^\infty$-generation for $\gam \leq -2s$} We start with the key estimates for $I_{p-1}(f, f_k)$ and $I_p(f, f_k)$. 

\begin{lemma}\label{lem:strong-soft}
Let $D_0,E_0>0$, $\gam\in(-3,-2s]$, $p\in \vpran{\frac{3}{3+\gam+2s},\infty}$. 
Suppose $f(t,v)$ is a sufficiently smooth solution to~\eqref{eq:Boltzmann} on $(0,T]\times\rn{3}$ with $f_0\in\mathcal{U}(D_0,E_0)\cap\lp{p} \cap L^1_w$ with $w$ sufficiently large and $B$ satisfying~\eqref{cond:B-1}-\eqref{cond:B-3}. Let $t_\ast$ be the time in Theorem~\ref{thm:L-p-strong-soft} for the $L^p$-propagation and the $L^{q'}$-generation of $f$ with $q'$ being the H\"{o}lder conjugate of $q$ defined in~\eqref{cond:theta-8-9} and~\eqref{cond:ell}. Let $I_p(f, f_k)$ and $J_p(f, f_k)$ be given by \eqref{def:I-p}-\eqref{def:J-p}. Then for any $t \in [\frac{t_\ast}{8}, t_\ast]$, 
\begin{align*}
  K I_{p-1}(t) + I_p(t)
&\leq
      C 2^{k(r - p + 1)} K^{-(r - p)}
  \norm{f_{k-1}}_{\lp{p}}^{p \theta_7}\norm{f_{k-1}^{p/2}}_{\whp{s}{\gam/2}}^{\frac{2p_s}{p}(1-\theta_6-\theta_7)}
\\
& \quad \, 
    + C 2^{k(\ell - p + 1)} K^{-(\ell - p)}
      \norm{f_{k-1}}_{L^p}^{\frac{p\theta_9}{q}}
  \norm{f_{k-1}^{p/2}}_{H^s_{\gamma/2}}^{\frac{2 p_s}{p q} (1-\theta_8-\theta_9)}, 
\end{align*}
where $\theta_6, \theta_7, r, \theta_8, \theta_9, \ell, q$ satisfy~\eqref{cond:theta-6-7}, \eqref{def:r}, \eqref{cond:theta-8-9} and~\eqref{cond:ell}, and the constant $C$ depends on $D_0, E_0, \gamma, \norm{f_0}_{L^p \cap L^1_w}, t_\ast, p$.
\end{lemma}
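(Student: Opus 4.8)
The plan is to split $I_{p-1}(f,f_k)$ and $I_p(f,f_k)$ via Corollary~\ref{cor:I-p} (applied with $g = f$ and the second argument being $f_k$, which is the object that gets raised to the power in the definition of $I$), then run the same Hölder--HLS--interpolation machinery as in Lemma~\ref{lem: Ip-bound}{\it ii)}--{\it iii)}, but now track the extra factors that come from trading $f$ for $f_{k-1}$ through Lemma~\ref{lem:inhomog}. Specifically, writing $I_p(f,f_k) \leq C\,\|f\|_{L^{p_0}}\|f_k^{p}\ast|\cdot|^\gamma\|_{L^{p_0'}} \leq C\,\|f\|_{L^{p_0}}\|f_k^{p}\|_{L^{q}}$ with $\tfrac{1}{q}=2+\tfrac{\gamma}{3}-\tfrac{1}{p_0}$, one controls $\|f\|_{L^{p_0}}$ by the $L^{q'}$-generation from Theorem~\ref{thm:L-p-strong-soft} (valid on $[\tfrac{t_\ast}{8}, t_\ast]$, which is why the statement restricts to that interval and why the constant $C$ is allowed to depend on $t_\ast$ and $\|f_0\|_{L^p\cap L^1_w}$), and then estimates $\|f_k^{p}\|_{L^q} = \|f_k\|_{L^{pq}}^{p}$. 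The point is that $pq$ may fall below $1$, so one cannot interpolate $L^{pq}$ between $L^1$ and $L^{p_s}$ directly against $f_k$; instead, use Lemma~\ref{lem:inhomog} to write $f_k \leq f_k^{1-\eta} f_k^{\eta} \leq f_k^{1-\eta}\big(\tfrac{2^k}{K}\big)^{\alpha\eta}\cdots f_{k-1}^{\alpha\eta}$-type bounds, pulling out powers of $2^k K^{-1}$ until the remaining exponent on $f_{k-1}$ is $\geq 1$ and can be interpolated between $L^p$ and $L^{p_s}_{\gamma/2}\hookleftarrow H^s_{\gamma/2}$. This produces the prefactor $2^{k(\ell-p+1)}K^{-(\ell-p)}$ with $\ell$ determined by how much homogeneity had to be broken, and an interpolation exponent $\theta_8,\theta_9$ fixing the split of the $L^{pq}$-norm into an $L^p$ piece (contributing $\|f_{k-1}\|_{L^p}^{p\theta_9/q}$) and a Sobolev piece (contributing the $H^s_{\gamma/2}$ factor with exponent $\tfrac{2p_s}{pq}(1-\theta_8-\theta_9)$, matching the $\tfrac{2}{p}$ conversion in~\eqref{Sobolev}).

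For the $KI_{p-1}$ term I would do exactly the same, but starting from $I_{p-1}(f,f_k) \leq C\,\|f\|_{L^p}\|f_k^{p-1}\ast|\cdot|^\gamma\|_{L^{p'}}$ (here the natural Hölder pairing is $p,p'$ since $f_k$ enters at power $p-1$, not $p$, in $I_{p-1}$); the constant $K$ out front combines with the $2^k K^{-1}$ factors from Lemma~\ref{lem:inhomog} to give the net prefactor $2^{k(r-p+1)}K^{-(r-p)}$, with $r$ playing the role for $I_{p-1}$ that $\ell$ plays for $I_p$, and $\theta_6,\theta_7$ the analogous interpolation exponents. The bookkeeping is entirely parallel, so in the write-up I would do one term in detail and indicate that the other is identical up to relabelling. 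The admissibility of every interpolation — i.e.\ that the target Lebesgue exponent genuinely lies between $1$ (or $p$) and $p_s = \tfrac{3p}{3-2s}$ — follows from the hypothesis $p > \tfrac{3}{3+\gamma+2s}$ and $p_0 \in\big(\tfrac{3}{3+\gamma+2s},\tfrac{3}{3+\gamma}\big)$ exactly as in the proof of Lemma~\ref{lem: Ip-bound}, so I would invoke that computation rather than repeat it. One also needs $q' \in \big(\tfrac{3}{3+\gamma+2s},\tfrac{3}{3+\gamma}\big)$ so that the $L^{q'}$-generation of Theorem~\ref{thm:L-p-strong-soft} is actually applicable; this is precisely what conditions~\eqref{cond:theta-8-9} and~\eqref{cond:ell} are arranged to guarantee, and I would verify it as a short arithmetic check on the exponents.

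The main obstacle is the choice of the splitting exponents in Lemma~\ref{lem:inhomog}: one must break \emph{just enough} homogeneity so that (i) the residual power of $f_{k-1}$ is an admissible interpolation target, (ii) the exponent $c_1$ (resp.\ $c_2$) of $W_{k-1}$ that ultimately appears in the De Giorgi iteration exceeds $1$ — so that Lemma~\ref{lem:W-k} can later be applied — and (iii) the power of $K$ in the prefactor is negative (i.e.\ $\ell > p$, $r > p$), which is what makes the smallness-in-$K$ mechanism work. Balancing these three requirements simultaneously is the delicate part; everything else is Hölder, HLS, Sobolev embedding, and the already-established $L^1_w$ and $L^{p_0}$ controls. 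I expect the cleanest route is to first fix $q$ (equivalently $p_0$) and the Sobolev exponent $p_s$, then solve for the interpolation weights $\theta_\bullet$ and read off $r,\ell$ as affine functions of those weights, checking the three inequalities at the end; this mirrors the "fix $q$, solve for $\theta_1$, read off the constraint" logic already used in Lemma~\ref{lem: Ip-bound}.
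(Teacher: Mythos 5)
Your proposal misses the key structural step of the paper's argument: the decomposition of the singular kernel $\abs{v-v_*}^\gamma$ into a far-field part ($\abs{v-v_*}>1$) and a near-field part ($\abs{v-v_*}<1$). The paper does \emph{not} apply HLS globally as in Lemma~\ref{lem: Ip-bound}~\textit{ii)--iii)}. Instead, starting from the first inequality of Corollary~\ref{cor:I-p}, it writes
\begin{align*}
I_{p-1}(f,f_k) \leq C\norm{f}_{L^1}\int_{\rn{3}}f_k^{p-1}\,dv + C\int_{\abs{v-v_*}<1}f_*f_k^{p-1}\abs{v-v_*}^\gamma\,dv_*\,dv,
\end{align*}
treating the two pieces by entirely different means: the far field is controlled by the trivial $L^1$-bound (since $\abs{v-v_*}^\gamma\leq 1$ there), and the near field by the Young convolution inequality with the $L^1$-kernel $\abs{v}^\gamma\One_{\abs{v}<1}$ together with the $L^{q'}$-generation of Theorem~\ref{thm:L-p-strong-soft} — \emph{not} HLS. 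This matters: the far-field piece gives rise to the $(r,\theta_6,\theta_7)$-family, and the near-field piece to the $(\ell,\theta_8,\theta_9,q)$-family. Each of $I_{p-1}$ and $I_p$ produces \emph{both} families of terms; the two families are \emph{not} in one-to-one correspondence with $I_{p-1}$ and $I_p$ as you assert when you write that ``$r$ plays the role for $I_{p-1}$ that $\ell$ plays for $I_p$.'' Rather, the effect of passing from $I_{p-1}$ to $I_p$ is simply to shift the powers $(r-p+1,\ell-p+1)$ of $2^k/K$ down to $(r-p,\ell-p)$, which is why, upon multiplying $I_{p-1}$ by $K$, the two contributions align under a common prefactor $2^{k(r-p+1)}K^{-(r-p)}$ and $2^{k(\ell-p+1)}K^{-(\ell-p)}$.

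A secondary but related issue is that your global-HLS route pairs $\norm{f}_{L^{p_0}}$ with $\norm{f_k^p}_{L^{q_0}}$ for one single dual-exponent pair $(p_0,q_0)$, and would have to satisfy a single admissibility constraint simultaneously encoding the behavior near and far from the diagonal; the paper's split avoids exactly this tension by giving itself the freedom to use $L^1$-control far away and $L^{q'}$-control nearby, with independently chosen exponent families. Your description of using Lemma~\ref{lem:inhomog} to raise the effective power of $f_{k-1}$ into the interpolation window is correct and matches the paper, but without the kernel split you cannot recover the stated two-term bound with its two independent parameter sets $(r,\theta_6,\theta_7)$ and $(\ell,\theta_8,\theta_9,q)$ subject to the separate constraints~\eqref{cond:theta-6-7}, \eqref{def:r}, \eqref{cond:theta-8-9}, \eqref{cond:ell}.
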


\begin{proof} By the first inequality in Corollary~\ref{cor:I-p}, we have
\begin{align}\label{eq:I-p-1}
    I_{p-1}(f,f_k) 
&\leq 
   C \int_{\rn{3}\times\rn{3}}f_*f_k^{p-1}\abs{v-v_*}^{\gam} dv_*\, dv \nonumber
\\
&= C \left(\int_{\abs{v-v_*}>1}f_*f_k^{p-1}\abs{v-v_*}^{\gam} dv_*\, dv 
+ \int_{\abs{v-v_*}
<1}f_*f_k^{p-1}\abs{v-v_*}^{\gam} dv_*\, dv\right) \nonumber
\\
&\leq 
  C \norm{f}_{\lp{1}}\int_{\rn{3}}f_k^{p-1}dv 
  + C \int_{\abs{v-v_*}<1}f_*f_k^{p-1}\abs{v-v_*}^{\gam} dv_*\, dv.
\end{align}

Choose $\theta_6, \theta_7 \in(0,1)$ such that 
\begin{align} \label{cond:theta-6-7}
   \theta_6+\theta_7<1,
\qquad
   p_s(1-\theta_6-\theta_7) < p, 
\qquad
  p\theta_7 + p_s(1-\theta_6-\theta_7) > p. 
\end{align} 
There is a range of $\theta_6, \theta_7$ where the conditions in~\eqref{cond:theta-6-7} can be satisfied. For example, we can choose $\theta_6$ sufficiently close to $0$ and $\theta_7 = 1 - \frac{p}{2p_s}$. Let
\begin{align} \label{def:r}
   r 
= \theta_6 + p\theta_7 + p_s(1-\theta_6-\theta_7). 
\end{align}
Then $p < r < p_s$. By applying Lemma \ref{lem:inhomog}, we get that
\begin{equation*}
   \int_{\rn{3}}f_k^{p-1}dv 
\leq 
  \left(\frac{2^k}{K}\right)^{r - p + 1}
  \int_{\rn{3}}f_{k-1}^{r}dv,
\qquad
  r > p.
\end{equation*}
By the H\"{o}lder's inequality and~\eqref{Sobolev}  we obtain the following bound:
\begin{align} \label{bound:f-k-1-1}
   \int_{\rn{3}}f_{k-1}^{r}dv 
&\leq 
   \norm{f_{k-1}}_{L^1_w}^{\theta_6}
     \norm{f_{k-1}}_{\lp{p}}^{p \theta_7}
     \norm{f_{k-1}}_{\wlp{p_s}{\gam/2}}^{p_s(1-\theta_6-\theta_7)}  \nn
\\
&\leq 
  C \norm{f_{k-1}}_{\wlp{1}{w}}^{\theta_6}\norm{f_{k-1}}_{\lp{p}}^{p \theta_7}\norm{f_{k-1}^{p/2}}_{\whp{s}{\gam/2}}^{\frac{2p_s}{p}(1-\theta_6-\theta_7)} \nn
\\
&\leq 
  C \norm{f_{k-1}}_{\lp{p}}^{p \theta_7}\norm{f_{k-1}^{p/2}}_{\whp{s}{\gam/2}}^{\frac{2p_s}{p}(1-\theta_6-\theta_7)}, 
\end{align}
where $w$ is large enough and $C$ depends on $\norm{f_0}_{L^1_{w}}$. Then the first term on the right-hand side of~\eqref{eq:I-p-1} satisfies
\begin{align*}
  C \norm{f}_{\lp{1}}\int_{\rn{3}}f_k^{p-1}dv
\leq 
  C \left(\frac{2^k}{K}\right)^{r-p+1}
  \norm{f_{k-1}}_{\lp{p}}^{p \theta_7}\norm{f_{k-1}^{p/2}}_{\whp{s}{\gam/2}}^{\frac{2p_s}{p}(1-\theta_6-\theta_7)}
\end{align*}
with $\theta_6, \theta_7, r$ satisfying~\eqref{cond:theta-6-7} and~\eqref{def:r}. 

The second term on the right-hand side of~\eqref{eq:I-p-1} is bounded as follows:
\begin{align} \label{bound:f-k-1-2}
  \int_{\abs{v-v_*}<1}f_*f_k^{p-1}\abs{v-v_*}^{\gam} dv_*\, dv
& = \int_{\mathbb{R}^6} f_*f_k^{p-1}\abs{v-v_*}^{\gam} \One_{|v-v_\ast| < 1}dv_*\, dv  \nn
\\
&= \int_{\RR^3} \vpran{f \ast |v|^\gamma \One_{|v|<1}} f_k^{p-1} dv \nn
\\
& \leq
  \norm{f \ast |v|^\gamma \One_{|v|<1}}_{L^{q'}}
  \norm{f_k^{p-1}}_{L^q} \nn
\\
& \leq
  C \norm{f}_{L^{q'}} \norm{f_k^{p-1}}_{L^q}
\leq
  C \norm{f_k^{p-1}}_{L^q},
\end{align}
by the $L^{q'}$-generation of the solution for $q' < \infty$ to be determined. Here $(q, q')$ are H\"{o}lder conjugates. Choose $\theta_8, \theta_9 \in(0,1)$, $q > 1$ and $\ell > p$ such that 
\begin{align} \label{cond:theta-8-9}
   \theta_8+\theta_9<1,
\qquad
   p_s(1-\theta_8-\theta_9) < p q, 
\qquad
  p\theta_9 + p_s(1-\theta_8-\theta_9) > p q, 
\end{align} 
and 
\begin{align} \label{cond:ell}
   \ell q = \theta_8 + p\theta_9 + p_s\vpran{1-\theta_8-\theta_9}, 
\qquad
   \ell > p. 
\end{align}
Similar as for $\theta_6, \theta_7$, we can find a range of $(\theta_8, \theta_9, \ell, q)$ such that~\eqref{cond:theta-8-9} and~\eqref{cond:ell} hold. For example, if we choose $q$ close enough to $1$, then $(\theta_8, \theta_9) = (\theta_6, \theta_7)$ will work.

By Lemma~\ref{lem:inhomog} and~\eqref{Sobolev}, we have
\begin{align} \label{bound:f-k-1-3}
  \norm{f_k^{p-1}}_{L^q}
&\leq
  \vpran{\frac{2^k}{K}}^{\ell - p + 1} \norm{f_{k-1}}_{L^{\ell q}}^{\ell}  \nn
\\
&\leq
  \vpran{\frac{2^k}{K}}^{\ell - p + 1} \norm{f_{k-1}}_{L^1_{w}}^{\frac{\theta_8}{q}}
  \norm{f_{k-1}}_{L^p}^{\frac{p\theta_9}{q}}
  \norm{f_{k-1}}_{L^{p_s}_{\gamma/2}}^{\frac{1-\theta_8-\theta_9}{q}} \nn
\\
&\leq
  C \vpran{\frac{2^k}{K}}^{\ell - p + 1} 
  \norm{f_{k-1}}_{L^p}^{\frac{p\theta_9}{q}}
  \norm{f_{k-1}^{p/2}}_{H^s_{\gamma/2}}^{\frac{2 p_s}{p q} (1-\theta_8-\theta_9)}. 
\end{align}
Therefore the second term on the right-hand side of~\eqref{eq:I-p-1} satisfies
\begin{align*}
  \int_{\abs{v-v_*}<1}f_*f_k^{p-1}\abs{v-v_*}^{\gam} dv_*\, dv
\leq 
  C
  \vpran{\frac{2^k}{K}}^{\ell - p + 1} 
  \norm{f_{k-1}}_{L^p}^{\frac{p\theta_9}{q}}
  \norm{f_{k-1}^{p/2}}_{H^s_{\gamma/2}}^{\frac{2 p_s}{p q} (1-\theta_8-\theta_9)}. 
\end{align*}
To summarize, we have
\begin{align*}
  I_{p-1}(f,f_k) 
&\leq 
  C \left(\frac{2^k}{K}\right)^{r - p + 1}
  \norm{f_{k-1}}_{\lp{p}}^{p \theta_7}\norm{f_{k-1}^{p/2}}_{\whp{s}{\gam/2}}^{\frac{2p_s}{p}(1-\theta_6-\theta_7)}
\\
& \quad \, 
    + C \left(\frac{2^k}{K}\right)^{\ell - p+1}
      \norm{f_{k-1}}_{L^p}^{\frac{p\theta_9}{q}}
  \norm{f_{k-1}^{p/2}}_{H^s_{\gamma/2}}^{\frac{2 p_s}{p q} (1-\theta_8-\theta_9)}, 
\end{align*}
where $r, \theta_6, \theta_7, \theta_8, \theta_9, \ell$ satisfy~\eqref{cond:theta-6-7}, \eqref{def:r}, \eqref{cond:theta-8-9} and~\eqref{cond:ell}. 

\smallskip
The bound for $I_p$ follows a similar line of estimates. In particular, replacing $p-1$ in~\eqref{eq:I-p-1} by $p$, we have
\begin{align}\label{eq:I-p}
    I_{p}(f,f_k) 
\leq 
  C \norm{f}_{\lp{1}}\int_{\rn{3}}f_k^{p}dv 
  + C \int_{\abs{v-v_*}<1}f_*f_k^{p}\abs{v-v_*}^{\gam} dv_*\, dv, 
\end{align}
where by~\eqref{bound:f-k-1-1},
\begin{align} \label{bound:f-k-p}
   \int_{\rn{3}}f_k^{p}dv
\leq
  \vpran{\frac{2^k}{K}}^{r-p}
  \int_{\rn{3}}f_{k-1}^{r}dv
\leq
  C \vpran{\frac{2^k}{K}}^{r-p}
  \norm{f_{k-1}}_{\lp{p}}^{p \theta_7}\norm{f_{k-1}^{p/2}}_{\whp{s}{\gam/2}}^{\frac{2p_s}{p}(1-\theta_6-\theta_7)},
\end{align}
with $\theta_6, \theta_7, r$ being the same parameters as in the bound for $I_{p-1}$. Moreover, by a similar estimate as in~\eqref{bound:f-k-1-2} and~\eqref{bound:f-k-1-3} and choosing the same parameters $\ell, \theta_8, \theta_9, q$, we have
\begin{align*}
  \int_{\abs{v-v_*}<1}f_*f_k^{p}\abs{v-v_*}^{\gam} dv_*\, dv
&\leq
  C \norm{f_k^p}_{L^{q}}
\leq
  C \vpran{\frac{2^k}{K}}^{\ell - p} \norm{f_{k-1}^{\ell}}_{L^{q}}
 \\
&\leq
  C \vpran{\frac{2^k}{K}}^{\ell - p} 
  \norm{f_{k-1}}_{L^p}^{\frac{p\theta_9}{q}}
  \norm{f_{k-1}^{p/2}}_{H^s_{\gamma/2}}^{\frac{2 p_s}{p q} (1-\theta_8-\theta_9)}.
\end{align*}
Therefore, $I_p$ is bounded as
\begin{align}
  I_p
&\leq 
  C \left(\frac{2^k}{K}\right)^{r - p}
  \norm{f_{k-1}}_{\lp{p}}^{p \theta_7}\norm{f_{k-1}^{p/2}}_{\whp{s}{\gam/2}}^{\frac{2p_s}{p}(1-\theta_6-\theta_7)} \nn
\\
& \quad \, 
    + C \left(\frac{2^k}{K}\right)^{\ell - p}
      \norm{f_{k-1}}_{L^p}^{\frac{p\theta_9}{q}}
  \norm{f_{k-1}^{p/2}}_{H^s_{\gamma/2}}^{\frac{2 p_s}{p q} (1-\theta_8-\theta_9)}, 
\end{align}
where $\theta_6, \theta_7, \theta_8, \theta_9$ satisfy~\eqref{cond:theta-6-7}, \eqref{cond:theta-8-9} and~\eqref{cond:ell}. The desired bound is the combination of the estimates for $I_p$ and $I_{p-1}$.
\end{proof}

\begin{remark} \label{Rmk:f-k-p}
The inequality~\eqref{bound:f-k-p} in the proof of Lemma~\ref{lem:strong-soft} also gives us the bound of $\norm{f_k}_{L^p}^p$ in terms of $\norm{f_{k-1}}_{\lp{p}}$ and $\norm{f_{k-1}^{p/2}}_{\whp{s}{\gam/2}}$. 
\end{remark}

The energy functional we will need is defined by
\begin{align} \label{def:W-k-strong-soft}
  W_k 
\coloneqq 
  \sup_{t\in[t_k, t_\ast]}\norm{f_k(t)}_{L^p}^p 
  + C\int_{t_k}^{t_\ast}\norm{f_k^{p/2}(\tau)}_{\whp{s}{\gam/2}}^2 d\tau,
\end{align}
where $t_\ast$ is the time period for the $L^p$-propagation and the $L^{q'}$-generation of $f$ with $q'$ being the H\"{o}lder conjugate of $q$ defined in~\eqref{cond:theta-8-9} and~\eqref{cond:ell} and $t_k$ is defined in~\eqref{def:t-k}.

We now show the generation of the $L^\infty$-norm. 
\begin{theorem}\label{thm:L-infty-strong}
Let $D_0, E_0, T>0$, $\gam\in(-3,-2s]$, $p\in \vpran{\frac{3}{3+\gam+2s}, \infty}$. 
Suppose $B$ satisfies~\eqref{cond:B-1}-\eqref{cond:B-3}. Suppose $f(t,v)$ is a sufficiently smooth solution to~\eqref{eq:Boltzmann} on $(0, T]\times\rn{3}$ with $f_0\in\mathcal{U}(D_0,E_0) \cap L^1_w \cap L^{p}$ where the weight $w$ is large enough.  Let $t_\ast$ be the time period in Theorem~\ref{thm:L-p-strong-soft} for the $L^p$-propagation and the $L^{q'}$-generation of $f$ with $q'$ being the H\"{o}lder conjugate of $q$ defined in~\eqref{cond:theta-8-9} and~\eqref{cond:ell}.  Then there are constants $C>0$ depending on $\gam,s,D_0,E_0, \norm{f_0}_{L^1_w \cap L^{p}}$ and $\alpha_6 > 0$ depending on $\gamma, s$ such that  
\begin{equation}\label{eqn;Linfty;generation small gam}
   \sup_{t\in[\frac{t_\ast}{2}, \,\, t_\ast]}\norm{f(t)}_{\lp{\infty}} 
\leq 
  C \vpran{t_\ast^{-\alpha_6} + 1}. 
\end{equation}
\end{theorem}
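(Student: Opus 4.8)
\emph{Proof plan.} The strategy is a De Giorgi iteration on the energy functional $W_k$ of~\eqref{def:W-k-strong-soft}, built from the truncations $f_k$ and times $t_k$ in~\eqref{def:f-k}--\eqref{def:t-k}. Fix a level $K>0$ to be determined. Since $f_k^{p-1}$ vanishes on $\{f<K_k\}$ and $\partial_t f_k = (\partial_t f)\One_{f\geq K_k}$ a.e., we have $\dv{t}\norm{f_k(t)}_{\lp{p}}^p = p\int_{\rn{3}}Q(f,f)f_k^{p-1}\,dv$ (after the usual smooth approximation of $f_k^{p-1}$). Invoking Lemma~\ref{lem:Q-f-k}, then the coercivity bound of Lemma~\ref{lem:J-p} applied to $J_p(f,f_k)$ (legitimate because $f(t)\in\mathcal{U}(D_0,E_0)$ by the conservation laws~\eqref{eqn;intro;conservation laws}--\eqref{eqn;intro;H theorem}), and using $\norm{f_k^{p/2}}_{\wlp{2}{\gam/2}}^2\leq\norm{f_k}_{\lp{p}}^p$ since $\gam<0$, I obtain for $t\in[t_\ast/8,t_\ast]$ a differential inequality of the form
\[
   \dv{t}\norm{f_k(t)}_{\lp{p}}^p + c_0\norm{f_k^{p/2}(t)}_{\whp{s}{\gam/2}}^2
\leq
   pK\,I_{p-1}(f,f_k) + \tfrac{p}{p'}I_p(f,f_k) + C\norm{f_k(t)}_{\lp{p}}^p,
\qquad c_0>0.
\]

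Next I would insert the bounds of Lemma~\ref{lem:strong-soft} (whose constants already absorb the $L^p$-propagation and the $L^{q'}$-generation of $f$ on $[t_\ast/8,t_\ast]$ furnished by Theorem~\ref{thm:L-p-strong-soft}) into $I_{p-1}$ and $I_p$, and insert~\eqref{bound:f-k-p} (Remark~\ref{Rmk:f-k-p}) into the remaining $\norm{f_k}_{\lp{p}}^p$ term, so that every right-hand side quantity becomes a power of $2^k/K$ times uniformly bounded norms (the $\norm{f_{k-1}}_{\wlp{1}{w}}$ and $\norm{f_{k-1}}_{\lp{p}}$ factors, controlled by Theorem~\ref{thm:L1-prop} and Theorem~\ref{thm:L-p-strong-soft}) times a power of $\norm{f_{k-1}^{p/2}}_{\whp{s}{\gam/2}}$ whose exponent, $\tfrac{2p_s}{p}(1-\theta_6-\theta_7)$ or $\tfrac{2p_s}{pq}(1-\theta_8-\theta_9)$, is $<2$ by the second conditions in~\eqref{cond:theta-6-7} and~\eqref{cond:theta-8-9}. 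I then integrate in time from an instant $t_0\in[t_{k-1},t_k]$ selected by averaging so that $\norm{f_k(t_0)}_{\lp{p}}^p\leq\tfrac{1}{t_k-t_{k-1}}\int_{t_{k-1}}^{t_k}\norm{f_k(\tau)}_{\lp{p}}^p\,d\tau$, which by~\eqref{bound:f-k-p} is again of the same product form (here $t_k-t_{k-1}=t_\ast 2^{-(k+2)}$). Using $f_k\leq f_{k-1}$, the Hölder inequality in time (licit exactly because the above exponents are $<2$), and the definition of $W_{k-1}$, every resulting time integral is dominated by a power of $W_{k-1}$; these powers exceed $1$ because $\theta_7+\tfrac{p_s}{p}(1-\theta_6-\theta_7)=\tfrac{r-\theta_6}{p}>1$ and $\tfrac{\theta_9}{q}+\tfrac{p_s}{pq}(1-\theta_8-\theta_9)=\tfrac{\ell q-\theta_8}{pq}>1$ by the third conditions in~\eqref{cond:theta-6-7}, \eqref{cond:theta-8-9} together with~\eqref{def:r} and~\eqref{cond:ell}. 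Collecting terms (and using $K\geq 1$ to reduce to a single power of $K$) yields
\[
   W_k \leq C\,2^{ak}K^{-b}\vpran{W_{k-1}^{c_1}+W_{k-1}^{c_2}},
\qquad
   a=\max\{r,\ell\}-p+1,\quad b=\min\{r,\ell\}-p>0,
\]
with $1<c_1\leq c_2$ after relabelling, and $C$ depending on $\gam,s,p,D_0,E_0,\norm{f_0}_{\lp{p}\cap\wlp{1}{w}}$ and on $t_\ast$ (through Lemma~\ref{lem:strong-soft}, the factor $(t_k-t_{k-1})^{-1}\lesssim t_\ast^{-1}2^k$, and the length of the interval).

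Finally I would apply Lemma~\ref{lem:W-k}(b). The base value is $W_0=\sup_{t\in[t_\ast/4,t_\ast]}\norm{f(t)}_{\lp{p}}^p+C\int_{t_\ast/4}^{t_\ast}\norm{f^{p/2}(\tau)}_{\whp{s}{\gam/2}}^2\,d\tau$, since $K_0=0$ forces the level-$0$ truncation to equal $f$ and $t_0=t_\ast/4$ in the notation of~\eqref{def:f-k}--\eqref{def:t-k}. Choosing $K$ large enough to satisfy~\eqref{cond:K-general} with this $W_0$, estimate~\eqref{bound:W-k-general} gives $W_k\leq W_0\,(2^{-a/(c_1-1)})^k\to 0$. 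By Theorem~\ref{thm:L-p-strong-soft} on $(0,t_\ast]$ (using $f_0\in\lp{1}\cap\lp{p}$, hence $f_0\in\lp{p_0}$ for an admissible $p_0$, which yields both the $L^p$-bound and the $\whp{s}{\gam/2}$-bound controlling $W_0$), one gets $W_0\leq C(t_\ast^{-\alpha}+1)$ for a computable $\alpha>0$, so the threshold for $K$ in~\eqref{cond:K-general} is itself $\leq C(t_\ast^{-\alpha_6}+1)$ for a computable $\alpha_6>0$; fix such a $K$. Since $t_k\uparrow t_\ast/2$, each $[t_k,t_\ast]$ contains $[t_\ast/2,t_\ast]$, whence $\sup_{t\in[t_\ast/2,t_\ast]}\norm{f_k(t)}_{\lp{p}}^p\leq W_k\to 0$; as $f_k=(f-K_k)_+\downarrow(f-K)_+$ pointwise and $f(t)\in\lp{p}$ for $t>0$, dominated convergence forces $(f(t)-K)_+=0$ a.e.\ for every $t\in[t_\ast/2,t_\ast]$, i.e.\ $\norm{f(t)}_{\lp{\infty}}\leq K\leq C(t_\ast^{-\alpha_6}+1)$, which is~\eqref{eqn;Linfty;generation small gam}.

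The main obstacle is manufacturing this \emph{decaying} recursion: one must extract a genuine gain $K^{-b}$ with $b=\min\{r,\ell\}-p>0$ while simultaneously keeping the $W_{k-1}$-exponents $c_1,c_2$ strictly above $1$. This is precisely what Lemma~\ref{lem:inhomog} supplies once the singular weight is split, $\abs{v-v_*}^\gam=\abs{v-v_*}^\gam\One_{\abs{v-v_*}<1}+\abs{v-v_*}^\gam\One_{\abs{v-v_*}\geq 1}$, the far part being handled by $\norm{f}_{\lp{1}}$ and the near part by the Hardy--Littlewood--Sobolev and Sobolev interpolations inside Lemma~\ref{lem:strong-soft}; and it is here that the hypothesis $p>\tfrac{3}{3+\gam+2s}$ is essential, as that range is exactly what makes the interpolation windows in~\eqref{cond:theta-6-7}--\eqref{cond:ell} (equivalently $1<pq<p_s$) nonempty. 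A further, more clerical difficulty is tracking the $t_\ast$-dependence through the time-averaging step and the Hölder-in-time inequality in order to produce the explicit negative power $t_\ast^{-\alpha_6}$.
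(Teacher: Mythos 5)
Your proposal follows essentially the same De Giorgi iteration as the paper: Lemma~\ref{lem:Q-f-k}, the coercivity of Lemma~\ref{lem:J-p}, the $I_{p-1}, I_p$ bounds of Lemma~\ref{lem:strong-soft}, time-averaging together with Remark~\ref{Rmk:f-k-p} to handle the boundary term, then closing the recursion for $W_k$ with Lemma~\ref{lem:W-k}(b) and Theorem~\ref{thm:L-p-strong-soft} to control $W_0$. The only cosmetic departures (using~\eqref{bound:f-k-p} rather than an integrating factor to absorb $C\norm{f_k}_{\lp{p}}^p$, normalizing $K\geq 1$ to collapse to a single $K^{-b}$, and making the H\"older-in-time step explicit) do not change the argument and are consistent with the paper's proof.
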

\begin{proof}
By Lemma~\ref{lem:Q-f-k}, the energy estimate for the level-set function $f_k$ reads
\begin{align*}
   \frac{1}{p}\dv{t}\norm{f_k(t)}_{\lp{p}}^p 
\leq KI_{p-1}(f,f_k) + \frac{1}{p'}I_p(f,f_k) - \frac{1}{\max\{p, p'\}}J_p(f,f_k).
\end{align*}
Combining~\eqref{bound:J-p} and Lemma~\ref{lem:strong-soft}, we have
\begin{align*}
   \dv{t}\norm{f_k(t)}_{\lp{p}}^p + C_4 \norm{f_k^{p/2}}_{H^s_{\gam/2}}^2
&\leq
  C 2^{k(r - p + 1)} K^{-(r - p)}
  \norm{f_{k-1}}_{\lp{p}}^{p \theta_7}\norm{f_{k-1}^{p/2}}_{\whp{s}{\gam/2}}^{\frac{2p_s}{p}(1-\theta_6-\theta_7)}
\\
& \quad \, 
    + C 2^{k(\ell - p + 1)} K^{-(\ell - p)}
      \norm{f_{k-1}}_{L^p}^{\frac{p\theta_9}{q}}
  \norm{f_{k-1}^{p/2}}_{H^s_{\gamma/2}}^{\frac{2 p_s}{p q} (1-\theta_8-\theta_9)}
  + C \norm{f_k}_{L^p}^p.
\end{align*}
Using the integration factor $e^{-Ct}$ and the boundedness of $t_\ast$, the last term can be absorbed and we obtain that 
\begin{align} \label{ineq:f-k-p-strong}
   \dv{t}\norm{f_k(t)}_{\lp{p}}^p 
   + C_5 \norm{f_k^{p/2}}_{H^s_{\gam/2}}^2
&\leq
  C 2^{k(r - p + 1)} K^{-(r - p)}
  \norm{f_{k-1}}_{\lp{p}}^{p \theta_7}
  \norm{f_{k-1}^{p/2}}_{\whp{s}{\gam/2}}^{\frac{2p_s}{p}(1-\theta_6-\theta_7)} \nn
\\
& \quad \, 
    + C 2^{k(\ell - p + 1)} K^{-(\ell - p)}
      \norm{f_{k-1}}_{L^p}^{\frac{p\theta_9}{q}}
  \norm{f_{k-1}^{p/2}}_{H^s_{\gamma/2}}^{\frac{2 p_s}{p q} (1-\theta_8-\theta_9)}. 
\end{align}
Integrating from $\xi \in [t_{k-1}, t_k]$ to $t \in [t_k, t_\ast]$, we have
\begin{align*}
& \quad \,
  \norm{f_k(t)}_{\lp{p}}^p
+ C_5 \int_{\xi}^t \norm{f_k^{p/2}(\tau)}_{H^s_{\gam/2}}^2  d\tau
\\
&\leq
  C 2^{k(r - p + 1)} K^{-(r - p)}
  \int_{\xi}^t \norm{f_{k-1}(\tau)}_{\lp{p}}^{p \theta_7}
  \norm{f_{k-1}^{p/2}(\tau)}_{\whp{s}{\gam/2}}^{\frac{2p_s}{p}(1-\theta_6-\theta_7)} d\tau
\\
& \quad \,
  + C 2^{k(\ell - p + 1)} K^{-(\ell - p)}
      \int_{\xi}^t \norm{f_{k-1}(\tau)}_{L^p}^{\frac{p\theta_9}{q}}
  \norm{f_{k-1}^{p/2}(\tau)}_{H^s_{\gamma/2}}^{\frac{2 p_s}{p q} (1-\theta_8-\theta_9)} d\tau
\\
& \quad \,
  + \norm{f_k(\xi)}_{\lp{p}}^p, 
\qquad
  r, \ell > p,
\end{align*}
which implies that 
\begin{align*}
& \quad \,
  \norm{f_k(t)}_{\lp{p}}^p
+ C_5 \int_{t_k}^t \norm{f_k^{p/2}(\tau)}_{H^s_{\gam/2}}^2  d\tau
\\
&\leq
  C 2^{k(r - p + 1)} K^{-(r - p)}
  \int_{t_{k-1}}^{t_\ast} \norm{f_{k-1}(\tau)}_{\lp{p}}^{p \theta_7}
  \norm{f_{k-1}^{p/2}(\tau)}_{\whp{s}{\gam/2}}^{\frac{2p_s}{p}(1-\theta_6-\theta_7)} d\tau
\\
& \quad \,
  + C 2^{k(\ell - p + 1)} K^{-(\ell - p)}
      \int_{t_{k-1}}^{t_\ast} \norm{f_{k-1}(\tau)}_{L^p}^{\frac{p\theta_9}{q}}
  \norm{f_{k-1}^{p/2}(\tau)}_{H^s_{\gamma/2}}^{\frac{2 p_s}{p q} (1-\theta_8-\theta_9)} d\tau
\\
& \quad \,
  + \norm{f_k(\xi)}_{\lp{p}}^p, 
\qquad
   r, \ell > p.
\end{align*}
Averaging the inequality above in $\xi$ on $[t_{k-1}, t_k]$ gives
\begin{align} \label{bound:f-k-p-xi}
& \quad \,
  \norm{f_k(t)}_{\lp{p}}^p
+ C_5 \int_{t_k}^t \norm{f_k^{p/2}(\tau)}_{H^s_{\gam/2}}^2  d\tau \nn
\\
&\leq
  C 2^{k(r - p + 1)} K^{-(r - p)}
  \int_{t_{k-1}}^{t_\ast} \norm{f_{k-1}(\tau)}_{\lp{p}}^{p \theta_7}
  \norm{f_{k-1}^{p/2}(\tau)}_{\whp{s}{\gam/2}}^{\frac{2p_s}{p}(1-\theta_6-\theta_7)} d\tau \nn
\\
& \quad \,
  + C 2^{k(\ell - p + 1)} K^{-(\ell - p)}
      \int_{t_{k-1}}^{t_\ast} \norm{f_{k-1}(\tau)}_{L^p}^{\frac{p\theta_9}{q}}
  \norm{f_{k-1}^{p/2}(\tau)}_{H^s_{\gamma/2}}^{\frac{2 p_s}{p q} (1-\theta_8-\theta_9)} d\tau \nn
\\
& \quad \,
  + \frac{1}{t_k - t_{k-1}}\int_{t_{k-1}}^{t_\ast}\norm{f_k(\xi)}_{\lp{p}}^p d\xi, 
\end{align}
where by the definition of $t_k$ and Remark~\ref{Rmk:f-k-p}, the last term satisfies
\begin{align*}
  \frac{1}{t_k - t_{k-1}}
  \int_{t_{k-1}}^{t_\ast}\norm{f_k(\xi)}_{\lp{p}}^p d\xi
& = \frac{2^{k+2}}{t_\ast}
      \int_{t_{k-1}}^{t_\ast}\norm{f_k(\xi)}_{\lp{p}}^p d\xi
\\
& \leq
   C 2^{k(r-p+1)} K^{-(r-p)}
  \int_{t_{k-1}}^{t_\ast} \norm{f_{k-1}}_{\lp{p}}^{p \theta_7}\norm{f_{k-1}^{p/2}}_{\whp{s}{\gam/2}}^{\frac{2p_s}{p}(1-\theta_6-\theta_7)}. 
\end{align*}
Applying it to~\eqref{bound:f-k-p-xi}, we have
\begin{align} \label{ineq:f-k-p}
& \quad \,
  \norm{f_k(t)}_{\lp{p}}^p
+ C_5 \int_{t_k}^t \norm{f_k^{p/2}(\tau)}_{H^s_{\gam/2}}^2  d\tau \nn
\\
&\leq
  C 2^{k(r - p + 1)} K^{-(r - p)}
  \int_{t_{k-1}}^{t_\ast} \norm{f_{k-1}(\tau)}_{\lp{p}}^{p \theta_7}
  \norm{f_{k-1}^{p/2}(\tau)}_{\whp{s}{\gam/2}}^{\frac{2p_s}{p}(1-\theta_6-\theta_7)} d\tau \nn
\\
& \quad \,
  + C 2^{k(\ell - p + 1)} K^{-(\ell - p)}
      \int_{t_{k-1}}^{t_\ast} \norm{f_{k-1}(\tau)}_{L^p}^{\frac{p\theta_9}{q}}
  \norm{f_{k-1}^{p/2}(\tau)}_{H^s_{\gamma/2}}^{\frac{2 p_s}{p q} (1-\theta_8-\theta_9)} d\tau. 
\end{align}
Taking the supremum of $t$ over $[t_k, t_\ast]$ in~\eqref{ineq:f-k-p} and using the definition of $W_k$ in~\eqref{def:W-k-strong-soft}, we obtain the recursive inequality
\begin{align*}
   W_k
 \leq
   C 2^{k(r - p + 1)} K^{-(r - p)} W_{k-1}^{\alpha_3}
   + C 2^{k(\ell - p + 1)} K^{-(\ell - p)} W_{k-1}^{\alpha_4}, 
\end{align*}
where 
\begin{align*}
  \alpha_3 
= \theta_7 + \frac{p_s}{p}(1-\theta_6-\theta_7) > 1, 
\qquad
  \alpha_4
= \frac{\theta_9}{q} + \frac{p_s}{pq} (1-\theta_8-\theta_9) > 1. 
\end{align*}
Therefore, if $W_0 < \infty$ and we choose
\begin{align*}
   K 
\geq 
    \vpran{\max \left\{C 2^{\alpha_4 + \frac{a \alpha_3}{\alpha_3-1}} W_0^{\alpha_3 + \alpha_4 - 2}, \,\,
  C 2^{1+ \frac{a \alpha_3}{\alpha_3-1}} W_0^{\alpha_3-1} \right\}}^{1/b} 
=: R_0^{1/b}, 
\end{align*}
where
\begin{align*}
   a = \max\{r-p+1, \,\, \ell - p + 1\}, 
\qquad
   b 
= \begin{cases}
    \min\{r - p, \ell - p\}, & \text{if \,\,} R_0 > 1, \\[3pt]
    \max\{r - p, \ell - p\}, & \text{if \,\,} R_0 < 1,
   \end{cases}
\end{align*}
then by Lemma~\ref{lem:W-k} part (b),  we have $W_k \to 0$, which implies that $\norm{f}_{L^\infty} \leq K$ on $[t_\ast/2, \, t_\ast]$ since $t_k \to t_\ast/2$. The condition $W_0 < \infty$ follows from the energy estimate. This is because $f_k = f$ for $k = 0$ and 
\begin{align*}
   W_0
 = \sup_{t\in \left[\frac{t_\ast}{2}, t_\ast \right]}\norm{f(t)}_{L^p}^p 
  + C\int_{t_\ast/2}^{t_\ast}\norm{f^{p/2}(\tau)}_{\whp{s}{\gam/2}}^2 d\tau.
\end{align*}
Therefore, by Theorem~\ref{thm:L-p-strong-soft}, we have
\begin{align} \label{bound:W-0}
  W_0 
\leq
  C (t_\ast^{-\alpha_5} + 1)
< \infty
\end{align}
for some $\alpha_5 > 0$. Here $C$ is independent of $t_\ast$ since all the dependence on $t_\ast$ has been combined into $t_\ast^{-\alpha_5}$. Therefore,  
\begin{align*}
   \norm{f}_{L^\infty}
\leq
  K 
\leq 
  C \vpran{\max \left\{t_\ast^{-\alpha_5 (\alpha_3 + \alpha_4 - 2)}, \,\, t_\ast^{-\alpha_5 (\alpha_3 - 1)} \right\} + 1}, 
\qquad
  t \in [t_\ast/2, \,\, t_\ast].
\end{align*}
If we take $t_\ast < 1$, then 
\begin{equation*}
  \sup_{t\in[\frac{t_\ast}{2}, \,\, t_\ast]}\norm{f(t)}_{\lp{\infty}} 
\leq 
  C \vpran{t_\ast^{-\alpha_5 (\alpha_3 + \alpha_4 - 2)} + 1}.  \qedhere
\end{equation*}
\end{proof}

\subsection{$\lp{\infty}$ Generation for $\gam>-2s$}\label{sec:Linfty-generation-weak}

The proof for $\gamma > -2s$ is similar and more straightforward than for the case when $\gamma \leq -2s$. We use similar notation
\begin{align*}
   K_k := K(1-2^{-k}),
\qquad
   f_k := (f-K_k)\one{f\geq K_k}, 
\qquad
  t_k := t_\ast (1 - 2^{-(k+1)}) \in [\tfrac{t_\ast}{2},  \, \, t_\ast], 
\end{align*}
and
\begin{align*}
   W_k
=   \sup_{t\in[t_k, T]}\norm{f_k(t)}_{L^p}^p 
  + C\int_{t_k}^{T}\norm{f_k^{p/2}(\tau)}_{\whp{s}{\gam/2}}^2 d\tau.
\end{align*}

The main result in this case states
\begin{theorem}\label{thm-Linfty-weak-soft}
Let $D_0,E_0,T>0$ be fixed. Suppose $B$ satisfies~\eqref{cond:B-1}-\eqref{cond:B-3} with $\gam\in(-2s, 0)$. If $f(t,v)$ is a sufficiently smooth solution to~\eqref{eq:Boltzmann} on $(0,T]\times\rn{3}$ with $f_0(v)\in\mathcal{U}(D_0,E_0)\cap L^1_w$ where the weight $w$ is large enough, then there are constants $C, \alpha_7 > 0$ where $C$ depends only on $\gam,s,D_0,E_0, T$ and $\norm{f_0}_{L^1_w}$ and $\alpha_7$ depends on $\gamma$ and $s$ such that
\begin{align} \label{bound:Linfty-weak}
  \sup_{t\in[t_\ast/2, \,T]} \norm{f(t)}_{\lp{\infty}} 
\leq 
   C \vpran{t_\ast^{-\alpha_7} + 1},
\qquad
  \forall t_\ast \in (0, T).
\end{align}
\end{theorem}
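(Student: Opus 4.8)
The plan is to mimic the De~Giorgi level-set argument developed in the previous subsection for $\gamma\le -2s$, but with the simplifications afforded by the weak-potential assumption $\gamma\in(-2s,0)$. As in Theorem~\ref{thm:L-infty-strong}, I would fix $K>0$ large (to be chosen), set $K_k=K(1-2^{-k})$, $f_k=(f-K_k)\one{f\ge K_k}$, $t_k=t_\ast(1-2^{-(k+1)})$, and study the decay of
\begin{align*}
  W_k
= \sup_{t\in[t_k,T]}\norm{f_k(t)}_{L^p}^p
  + C\int_{t_k}^{T}\norm{f_k^{p/2}(\tau)}_{\whp{s}{\gam/2}}^2\, d\tau.
\end{align*}
The starting point is the energy estimate obtained from the weak formulation~\eqref{eqn:weak-form} with test function $\varphi=f_k^{p-1}$, which via Lemma~\ref{lem:Q-f-k} gives
\begin{align*}
  \frac{1}{p}\dv{t}\norm{f_k(t)}_{L^p}^p
\le
  K I_{p-1}(f,f_k) + \frac{1}{p'} I_p(f,f_k) - \frac{1}{\max\{p,p'\}} J_p(f,f_k).
\end{align*}
Since the parameter $p$ is arbitrary here (no lower bound beyond $p>1$ is needed), I would first invoke Theorem~\ref{thm:Lp-weak-soft} to know that all $L^p$-norms are generated for $t>0$, so $W_0<\infty$ on $[t_\ast/2,T]$ with $W_0\le C(t_\ast^{-\alpha_5}+1)$ for a suitable $\alpha_5$, exactly as in~\eqref{bound:W-0}.

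The heart of the argument is to bound $KI_{p-1}(f,f_k)+I_p(f,f_k)$ by an inhomogeneous expression in $\norm{f_{k-1}}_{L^p}$ and $\norm{f_{k-1}^{p/2}}_{\whp{s}{\gam/2}}$, with a gain of a negative power of $K$ and a growth of $2^{ak}$. Here the weak-potential case is easier: because $\gamma>-2s$, the coercivity estimate in Lemma~\ref{lem:J-p} controls $\norm{f_k^{p/2}}_{\whp{s}{\gam/2}}^2$ directly, and the term $\abs{v-v_\ast}^\gamma$ is less singular, so I expect to be able to use Corollary~\ref{cor:I-p} together with~\eqref{bound:Ip-weak-soft} rather than the heavier Hardy-Littlewood-Sobolev splitting of Lemma~\ref{lem:strong-soft}. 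Concretely, starting from $I_p(f,f_k)\le C\norm{f}_{L^1_{|\gamma|}}\norm{f_k^{p/2}}_{H^{|\gamma|/2}_{\gamma/2}}^2$ and using $H^{|\gamma|/2}_{\gamma/2}\hookrightarrow H^s_{\gamma/2}\cap L^2_{\gamma/2}$ (valid since $|\gamma|<2s$), one bounds $I_p$ by a small multiple of $\norm{f_k^{p/2}}_{\whp{s}{\gam/2}}^2$ plus lower-order terms; the point is then that these lower-order $L^p$-type terms of $f_k$ can be upgraded to $f_{k-1}$ with the gain $(2^k/K)^{\alpha}$ via Lemma~\ref{lem:inhomog}, followed by the Sobolev interpolation~\eqref{Sobolev} into $L^1_w\cap L^{p_s}_{\gamma/2}$ and Theorem~\ref{thm:L1-prop} to discard the $L^1_w$ factor. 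The treatment of $KI_{p-1}(f,f_k)$ is analogous, contributing the factor $K$ which is compensated by an extra power $(2^k/K)^{1}$ coming from the fact that $f_k^{p-1}$ rather than $f_k^p$ appears (one loses one power of $f_k$, hence gains $2^k/K$). The outcome should be a recursion of the form $W_k\le C2^{ak}K^{-b}\bigl(W_{k-1}^{c_1}+W_{k-1}^{c_2}\bigr)$ with $1<c_1\le c_2$, where $c_1,c_2$ are the super-linear exponents $\theta_7+\frac{p_s}{p}(1-\theta_6-\theta_7)$-type quantities coming from the interpolation, after absorbing the small $\norm{f_k^{p/2}}_{\whp{s}{\gam/2}}^2$ term into the coercive part and integrating in time over $[t_{k-1},t_k]\to[t_k,T]$ with the averaging-in-$\xi$ trick exactly as in~\eqref{bound:f-k-p-xi}.

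With the recursion in hand, Lemma~\ref{lem:W-k}(b) finishes the proof: choosing $K\ge R_0^{1/b}$ with $R_0$ the maximum appearing in~\eqref{cond:K-general}, built from $C$, the exponents $a,b,c_1,c_2$, and $W_0$, one gets $W_k\to 0$, hence $\norm{f(t)}_{L^\infty}\le K$ on $[t_\ast/2,T]$ because $t_k\to t_\ast/2$. Tracking the dependence of $K$ on $W_0\le C(t_\ast^{-\alpha_5}+1)$ yields $K\le C(t_\ast^{-\alpha_7}+1)$ for $\alpha_7$ a linear combination of $\alpha_5$ with the exponents $c_1,c_2$ (taking $t_\ast<1$ to collapse the two powers into the larger one), which is precisely~\eqref{bound:Linfty-weak}. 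I expect the main obstacle to be purely bookkeeping: verifying that one can choose the interpolation parameters $\theta_6,\theta_7$ (and their analogues) so that simultaneously (i) the interpolation of $L^r$ into $L^1_w\cap L^{p_s}_{\gamma/2}$ is legitimate, i.e. $1<r<p_s$ with the exponents nonnegative, (ii) the resulting exponents $c_1,c_2$ on $W_{k-1}$ are strictly greater than $1$ so that Lemma~\ref{lem:W-k} applies, and (iii) the power of $\norm{f_k^{p/2}}_{\whp{s}{\gam/2}}^2$ produced is strictly less than $2$ (or exactly absorbable) so the coercive term on the left can swallow it after Young's inequality; all of this should go through since the constraints are the same open conditions as~\eqref{cond:theta-6-7}, now even easier because $\gamma>-2s$ makes the singular term subcritical.
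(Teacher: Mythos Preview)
Your overall De~Giorgi architecture and the handling of $I_p(f,f_k)$ via \eqref{bound:Ip-weak-soft} together with the interpolation $H^{|\gamma|/2}_{\gamma/2}\hookrightarrow H^s_{\gamma/2}\cap L^2_{\gamma/2}$ are on target; this is essentially how the paper treats $I_2$ (it fixes $p=2$). The genuine gap is in the ``analogous'' treatment of $KI_{p-1}$. If you apply \eqref{bound:Ip-weak-soft} with $p$ replaced by $p-1$ you obtain $I_{p-1}(f,f_k)\le C\norm{f}_{L^1_{|\gamma|}}\norm{f_k^{(p-1)/2}}_{H^{|\gamma|/2}_{\gamma/2}}^2$, and after the same interpolation a term $K\epsilon\,\norm{f_k^{(p-1)/2}}_{H^s_{\gamma/2}}^2$ remains on the right. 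This cannot be absorbed into the coercive piece $C_0\norm{f_k^{p/2}}_{H^s_{\gamma/2}}^2$: the exponent of $f_k$ inside the Sobolev norm is different, and there is no chain-rule or pointwise inequality that converts one into the other. The extra factor $(2^k/K)$ you invoke arises only from the lower-order $L^{p-1}$ remainder after Young's inequality, not from this Sobolev piece; moreover $K$ is chosen large \emph{after} $\epsilon$ is fixed, so $K\epsilon$ is not small.

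The paper sidesteps this by taking $p=2$ and, contrary to your expectation, still performing a near/far splitting of $\int f_* f_k|v-v_*|^\gamma$ to control $I_1$; the point is that the splitting is much lighter than Lemma~\ref{lem:strong-soft}. On $\{|v-v_*|<1\}$ one uses Young's convolution inequality with $|v|^\gamma\chi_{|v|<1}\in L^q$ for any $q<3/|\gamma|$, combined with the $L^{q'}$-generation of $f$ from Theorem~\ref{thm:Lp-weak-soft} (available for \emph{every} finite $q'$ precisely because $\gamma>-2s$), to get $|I_1(t)|\le C(t_\ast)\norm{f_k}_{L^1}$ with no Sobolev norm of $f_k$ at all. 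Then Lemma~\ref{lem:inhomog} gives $\norm{f_k}_{L^1}\le (2^k/K)^\alpha\int f_{k-1}^{1+\alpha}$ with $\alpha>1$, and interpolating $\int f_{k-1}^{1+\alpha}$ in $L^1_w\cap L^2\cap L^{p_s}_{\gamma/2}$ produces a single recursion $W_k\le C\,2^{\alpha k}K^{1-\alpha}W_{k-1}^c$ with $c>1$, after which Lemma~\ref{lem:W-k} closes. So the simplification afforded by $\gamma>-2s$ is not that the splitting disappears, but that the required $L^{q'}$-bound on $f$ is available for free.
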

\begin{proof}
By Theorem~\ref{thm:Lp-weak-soft}, the generation of the $L^p$-norm holds for all $p \geq 1$ in the case of $\gamma > -2s$. Therefore, we can simply choose $p=2$. By multiplying~\eqref{eq:Boltzmann} by $f_k$, integrating over $v\in\rn{3}$ and by lemma~\ref{est:Q-L-infty} we have
\begin{equation*}
\frac{1}{2}\dv{t}\norm{f_k(t)}_{\lp{2}}^2 \leq KI_1(f,f_k) + \frac{1}{2}I_2(f,f_k) - \frac{1}{2}J_2(f,f_k).
\end{equation*}
By Lemma~\ref{lem:J-p} and~\eqref{bound:Ip-weak-soft} in Lemma~\ref{lem: Ip-bound}, we obtain
\begin{align} \label{eq:f-k-weak}
   \frac{1}{2}\dv{t}\norm{f_k(t)}_{\lp{2}}^2 
+ \frac{C_0}{2} \norm{f_k}_{\whp{s}{\gam/2}}^2 
\leq 
   KI_1(f,f_k) + C \norm{f_k}_{\wlp{2}{\gam/2}}^2 
\leq 
   KI_1(f,f_k) + C \norm{f_k}_{\lp{2}}^2,
\end{align}
where 
\begin{align*}
   I_1(f, f_k)
= \int_{\rn{3}\times\rn{3}}\int_{\sn{2}} g_* (f'-f)B(v-v_*,\sigma)d\sigma\, dv_*\, dv. 
\end{align*}
Similar as Corollary~\ref{cor:I-p}, by the Cancellation Lemma, we have
\begin{align*}
  \abs{I_1}
&\leq
  C \int_{\rn{3}\times\rn{3}} f_\ast f_k \abs{v-v_*}^\gam dv_*\, dv,
\\
& \leq
  C \vpran{\int_{\abs{v-v_*}\leq 1}f_*f_k\abs{v-v_*}^\gam dv_*\, dv + \int_{\abs{v-v_*} > 1}f_*f_k\abs{v-v_*}^\gam dv_*\, dv}
\\
&\leq 
  C\int_{\abs{v-v_*}\leq 1}f_*f_k\abs{v-v_*}^\gam dv_*\, dv 
  + C\norm{f}_{\lp{1}}\norm{f_k}_{\lp{1}}. 
\end{align*}
Take $1<q<\frac{3}{\abs{\gam}}$ and let $q' > 1$ be the H\"{o}lder conjugate of $q$. Then
\begin{align*}
\int_{\abs{v-v_*}\leq 1} f_*f_k\abs{v-v_*}^\gam dv_*\, dv &\leq \norm{f}_{\lp{q'}}\norm{f_k*\abs{\cdot}^\gam\one{\abs{\cdot}\leq1}}_{\lp{q}} 
\\
&\leq 
  \norm{f}_{\lp{q'}}\norm{\abs{\cdot}^\gam\one{\abs{\cdot}\leq1}}_{\lp{q}}\norm{f_k}_{\lp{1}} 
\\
&= C\norm{f}_{\lp{q'}}\norm{f_k}_{\lp{1}}.
\end{align*}
By the generation of the $L^{q'}$-norm in Theorem~\ref{thm:Lp-weak-soft}, we obtain the bound
\begin{align*}
  |I_1 (t)|
\leq
  C_6 \vpran{\norm{f}_{\lp{q'}} + \norm{f}_{L^1}} \norm{f_k}_{\lp{1}}
\leq
  C_7 (t_\ast^{-\alpha_1} + 1) \norm{f_k}_{\lp{1}}
\leq
  C \norm{f_k}_{\lp{1}}, 
\qquad
  t \in [t_\ast, T],
\end{align*}
where $C$ depends on $T$, $\gamma$ and $t_\ast$ but is independent of $K$ or $k$. Applying the estimate of $I_1$ to~\eqref{eq:f-k-weak} and using the integrating factor $e^{-Ct}$ to absorb the last term on the right-hand side of~\eqref{eq:f-k-weak} gives
\begin{align} \label{bound:f-k-weak-1}
   \frac{1}{2}\dv{t}\norm{f_k(t)}_{\lp{2}}^2 
+ C_8 \norm{f_k}_{\whp{s}{\gam/2}}^2 
\leq 
  C K \norm{f_k}_{\lp{1}}. 
\end{align}
If we choose $\beta = 1$ in Lemma~\ref{lem:inhomog}, then
\begin{align*}
  \norm{f_k}_{\lp{1}}
= \int_{\RR^3} f_k \, \chi_{f \geq K_k} dv
\leq
  C \vpran{\frac{2^k}{K}}^\alpha 
  \int_{\RR^3} f_{k-1}^{1+\alpha} dv. 
\end{align*}
Choose $\alpha > 1$ and $\theta_{10}, \theta_{11}$ such that
\begin{align*}
  1 + \alpha = \theta_{10} + 2 \theta_{11} + \frac{6}{3 - 2s} (1 - \theta_{10} - \theta_{11}), 
\qquad
    \theta_{10}, \, \theta_{11} \in (0, 1),
\end{align*}
and 
\begin{align*}
\qquad
  0 < \theta_{10} + \theta_{11}  < 1, 
\qquad
  \theta_{11} + \frac{3}{3 - 2s} (1 - \theta_{10} - \theta_{11}) > 1, 
\qquad
  \frac{3}{3 - 2s} (1 - \theta_{10} - \theta_{11}) \leq 1.
\end{align*}
Such $(\alpha, \theta_{10}, \theta_{11})$ exist. For example, we can choose $\theta_{10}$ close enough to zero and $\theta_{11} = \frac{2s}{3}$. Then by interpolation, it holds that 
\begin{align*}  
  \int_{\RR^3} f_{k-1}^{1+\alpha} dv
&\leq
  \norm{f_{k-1}}_{L^1_w}^{\theta_{10}}
  \norm{f_{k-1}}_{L^2}^{2 \theta_{11}}
  \norm{f_{k-1}}_{L^{\frac{6}{3-2s}}_{\gamma/2}}^{\frac{6}{3-2s} (1 - \theta_{10} - \theta_{11})}
\\
&\leq
  \norm{f_{k-1}}_{L^1_w}^{\theta_{10}}
  \norm{f_{k-1}}_{L^2}^{2\theta_{11}}
  \norm{f_{k-1}}_{H^s_{\gamma/2}}^{\frac{6}{3 - 2s} (1 - \theta_{10} - \theta_{11})}, 
\end{align*}
where the weight $w$ is large enough. Inserting such bound into~\eqref{bound:f-k-weak-1}, we obtain that
\begin{align} \label{ineq:f-k-2-weak}
  \frac{1}{2}\dv{t}\norm{f_k(t)}_{\lp{2}}^2 
+ C_8 \norm{f_k}_{\whp{s}{\gam/2}}^2
\leq
  C 2^{\alpha k} K^{-\alpha+1}
  \norm{f_{k-1}}_{L^2}^{2\theta_{11}}
  \norm{f_{k-1}}_{H^s_{\gamma/2}}^{\frac{6}{3 - 2s} (1 - \theta_{10} - \theta_{11})},
\end{align}
for any $t \in [t_\ast, T]$. Since \eqref{ineq:f-k-2-weak} is in a similar format as~\eqref{ineq:f-k-p-strong}, the rest follows the same line of argument as for Theorem~\ref{thm:L-infty-strong} and we omit the details to avoid repetition. 
\end{proof}

\bibliographystyle{plain}
\bibliography{references}

\begin{thebibliography}{10}

\bibitem{ADVW00}
Radjesvarane Alexandre, Laurent Desvillettes, C{\'e}dric Villani, and Bernt
  Wennberg.
\newblock Entropy dissipation and long-range interactions.
\newblock {\em Archive for Rational Mechanics and Analysis}, 152(4):327--355,
  2000.

\bibitem{AMUXY12}
Radjesvarane Alexandre, Yoshinori Morimoto, Seiji Ukai, Chao-Jiang Xu, and Tong
  Yang.
\newblock Smoothing effect of weak solutions for the spatially homogeneous
  {B}oltzmann equation without angular cutoff.
\newblock {\em Kyoto Journal of Mathematics}, 52(3):433--463, 2012.

\bibitem{Alo19}
Ricardo Alonso.
\newblock Emergence of exponentially weighted {$L^p$}-norms and sobolev
  regularity for the {B}oltzmann equation.
\newblock {\em Communications in Partial Differential Equations},
  44(5):416--446, 2019.

\bibitem{ABDL2022}
Ricardo Alonso, V\'{e}ronique Bagland, Laurent Desvillettes, and Bertrand Lods.
\newblock About the landau-fermi-dirac equation with moderately soft
  potentials.
\newblock {\em Archive for Rational Mechanics and Analysis}, 244(3):779--875,
  2022.

\bibitem{ABDL2023}
Ricardo Alonso, V\'{e}ronique Bagland, Laurent Desvillettes, and Bertrand Lods.
\newblock Solutions to {L}andau equation under {P}rodi-{S}errin's like
  criteria.
\newblock {\em arxiv:2306.15729}, 2023.

\bibitem{Ark72}
Leif Arkeryd.
\newblock On the {B}oltzmann equation, {I}. existence.
\newblock {\em Archive for Rational Mechanics and Analysis}, 45(1):1--16, 1972.

\bibitem{Ark83}
Leif Arkeryd.
\newblock {$L^{\infty}$} estimates for the space-homogeneous {B}oltzmann
  equation.
\newblock {\em Journal of Statistical Physics}, 31(2):347--361, 1983.

\bibitem{CarlCarvL09}
Eric~A Carlen, Maria~C Carvalho, and Xuguang Lu.
\newblock On strong convergence to equilibrium for the {B}oltzmann equation
  with soft potentials.
\newblock {\em Journal of Statistical Physics}, 135(4):681--736, 2009.

\bibitem{GGL2024}
William Golding, Maria Gualdani, and Am\'{e}lie Loher.
\newblock Global smooth solutions to the {L}andau-{C}oulomb equation in
  ${L}^{3/2}$.
\newblock {\em arXiv:2401.06939}, 2024.

\bibitem{GL2024-1}
William Golding and Am\'{e}lie Loher.
\newblock Local-in-time strong solutions of the homogeneous {L}andau--{C}oulomb
  equation with ${L}^p$ initial datum.
\newblock {\em La Matematica}, 3:337--369, 2024.

\bibitem{GIS2023}
Fran\c{c}ois Golse, Cyril Imbert, and Luis Silvestre.
\newblock Partial regularity in time for the space-homogeneous {B}oltzmann
  equation with very soft potentials.
\newblock {\em arXiv:2312.11079}, 2023.

\bibitem{Gus88}
Tommy Gustafsson.
\newblock Global {$L^p$}-properties for the spatially homogeneous {B}oltzmann
  equation.
\newblock {\em Archive for Rational Mechanics and Analysis}, 103(1):1--38,
  1988.

\bibitem{MV04}
Cl{\'e}ment Mouhot and C{\'e}dric Villani.
\newblock Regularity theory for the spatially homogeneous {B}oltzmann equation
  with cut-off.
\newblock {\em Archive for Rational Mechanics and Analysis}, 173(2):169--212,
  2004.

\bibitem{P-CT18}
Milana Pavi{\'c}-{\v{C}}oli{\'c} and Maja Taskovi{\'c}.
\newblock Propagation of stretched exponential moments for the {K}ac equation
  and {B}oltzmann equation with {M}axwell molecules.
\newblock {\em Kinetic \& Related Models}, 11(3):597, 2018.

\bibitem{Silvestre2016}
Luis Silvestre.
\newblock A new regularization mechanism for the {B}oltzmann equation without
  cut-off.
\newblock {\em Communications in Mathematical Physics}, 348:69--100, 2016.

\bibitem{Silvestre2017}
Luis Silvestre.
\newblock Upper bounds for parabolic equations and the landau equation.
\newblock {\em Journal of Differential Equations}, 262(3):3034--3055, 2017.

\bibitem{Vil98}
C{\'e}dric Villani.
\newblock On a new class of weak solutions to the spatially homogeneous
  {B}oltzmann and {L}andau equations.
\newblock {\em Archive for Rational Mechanics and Analysis}, 143(3):273--307,
  1998.

\bibitem{Vil02}
C{\'e}dric Villani.
\newblock A review of mathematical topics in collisional kinetic theory.
\newblock In S.~Friedlander and D.~Serre, editors, {\em Handbook of
  Mathematical Fluid Dynamics}, volume~1 of {\em Handbook of Mathematical Fluid
  Dynamics}, pages 75--288. North-Holland, 2002.

\bibitem{Wen95}
Bernt Wennberg.
\newblock Stability and exponential convergence for the {B}oltzmann equation.
\newblock {\em Archive for Rational Mechanics and Analysis}, 130(2):103--144,
  1995.

\end{thebibliography}

\end{document}